\newtheorem{thm}{Theorem}[section]
\newtheorem{lem}[thm]{Lemma}
\theoremstyle{definition}
\newcommand{\scr}[1]{\mathscr #1}
\definecolor{wco}{rgb}{0.5,0.2,0.3}
\renewcommand{\bar}{\overline}
\numberwithin{equation}{section}
\newtheorem{rem}{Remark}[section]
\newcommand{\ua}{\uparrow}
\renewcommand{\hat}{\widehat}
\renewcommand{\tilde}{\widetilde}
\title{{\bf Strong convergence of a tamed theta scheme for NSDDEs with one-sided Lipschitz drift\thanks{Supported by NSFC(No., 11561027, 11661039), NSF of Jiangxi(No., 20161BAB211018), Scientific Research Fund of Jiangxi Provincial Education Department(No., GJJ150444).}}
}
\author
{ {\bf Li Tan}$^{\tt a, b}$ \, and \,
	{\bf Chenggui Yuan}$^{\tt c}$\thanks{Contact e-mail address: C.Yuan@swansea.ac.uk}\
	\\[0.5ex]
	$^{\tt a}$School of Statistics, Jiangxi University of Finance and Economics,\\
	Nanchang, Jiangxi, 330013, P. R. China \\
    $^{\tt b}$Research Center of Applied Statistics, Jiangxi University of Finance\\
	 and Economics, Nanchang, Jiangxi, 330013, P. R. China \\
	$^{\tt c}$Department of Mathematics\\
	Swansea University, Swansea,  SA2 8PP, U.K.\\}
\begin{document}
\def\R{\mathbb R}  \def\ff{\frac} \def\ss{\sqrt} \def\B{\mathbf
B}
\def\N{\mathbb N} \def\kk{\kappa} \def\m{{\bf m}}
\def\dd{\delta} \def\DD{\Delta} \def\vv{\varepsilon} \def\rr{\rho}
\def\<{\langle} \def\>{\rangle} \def\GG{\Gamma} \def\gg{\gamma}
  \def\nn{\nabla} \def\pp{\partial} \def\EE{\scr E}
\def\d{\text{\rm{d}}} \def\bb{\beta} \def\aa{\alpha} \def\D{\scr D}
  \def\si{\sigma} \def\ess{\text{\rm{ess}}}
\def\beg{\begin} \def\beq{\begin{equation}}  \def\F{\scr F}
\def\Ric{\text{\rm{Ric}}} \def\Hess{\text{\rm{Hess}}}
\def\e{\text{\rm{e}}} \def\ua{\underline a} \def\OO{\Omega}  \def\oo{\omega}
 \def\Ric{\text{\rm{Ric}}}
\def\cut{\text{\rm{cut}}} \def\P{\mathbb P} \def\ifn{I_n(f^{\bigotimes n})}
\def\C{\scr C}      \def\aaa{\mathbf{r}}     \def\r{r}
\def\gap{\text{\rm{gap}}} \def\prr{\pi_{{\bf m},\varrho}}  \def\r{\mathbf r}
\def\Z{\mathbb Z} \def\vrr{\varrho} \def\ll{\lambda}
\def\L{\scr L}\def\Tt{\tt} \def\TT{\tt}\def\II{\mathbb I}
\def\i{{\rm in}}\def\Sect{{\rm Sect}}\def\E{\mathbb E} \def\H{\mathbb H}
\def\M{\scr M}\def\Q{\mathbb Q} \def\texto{\text{o}} \def\LL{\Lambda}
\def\Rank{{\rm Rank}} \def\B{\scr B} \def\i{{\rm i}} \def\HR{\hat{\R}^d}
\def\to{\rightarrow}\def\l{\ell}
\def\8{\infty}\def\Y{\mathbb{Y}}

\maketitle

\begin{abstract}
This paper is concerned with strong convergence of a tamed theta scheme for neutral stochastic differential delay equations with one-sided Lipschitz drift. Strong convergence rate is revealed under a global one-sided Lipschitz condition, while for a local one-sided Lipschitz condition, the tamed theta scheme is modified to ensure the well-posedness of implicit numerical schemes, then we show the convergence of the numerical solutions.\\
\noindent
 {\it MSC 2010\/}:  65C30, 65L20   \\
\noindent
{\it Key Words }:  tamed theta scheme; neutral stochastic differential delay equations; one-sided Lipschitz; strong convergence
 \end{abstract}
 \vskip 2mm

\section{Introduction}
Numerical analysis plays an important role in studying stochastic differential equations (SDEs) because most equations can not be solved explicitly. The most commonly used method for approximating SDEs is the explicit Euler-Maruyama (EM) method. There are a lot of literature concerning with the explicit EM scheme for all kinds of SDEs, e.g.,  Hairer et al. \cite{Hai16},  Maruyama \cite{m55}, Milstein \cite{m95}, and Kloeden and Platen \cite{kp92}. Most of the early works on explicit EM scheme were about the SDEs with the globally Lipschitz continuous coefficients, since the explicit EM scheme solutions may not converge in the strong sense to the exact solutions with one-sided Lipschitz continuous and superlinearly growing drift coefficients. Moreover,  Hutzenthaler et al. \cite{hjk11} pointed out  that the absolute moments of the EM scheme at a finite time could diverge to infinity.  In order to cope with these difficulties,  Higham et.al \cite{hms02} studied a split-step backward Euler method  for nonlinear SDEs, they showed that the implicit EM scheme converged if the drift coefficient satisfied a one-sided Lipschitz condition and the diffusion coefficient was globally Lipschitz.  Hutzenthaler et al. \cite{hjk12}
proposed a tamed EM scheme in which the drift term is modified to guarantee the boundness of moments. Later, Sabanis \cite{sabanis13, sabanis15} studied the strong convergence of the tamed EM scheme and extend the tamed EM scheme to SDEs with superlinearly growing drift and diffusion coefficients, respectively. Although additional computational effort is needed for implicit analysis, the implicit EM schemes have been  showed better than the explicit EM scheme which converges strongly to the exact solution of SDEs under non-globally Lipschitz conditions. The implicit EM methods including the backward EM scheme, the split-step backward EM scheme and the theta scheme have been extensively studied, for example, Mao and Szpruch \cite{ms13} studied strong convergence and almost sure stability of the backward EM scheme and the theta scheme to SDEs with non-linear and non-Lipschitzian coefficients, to name a few.

Recently, numerical analysis for neutral stochastic differential delay equations (NSDDEs) has also received a great deal of attention, see e.g., Lan and Yuan \cite{ly15}, Wu and Mao \cite{wm08}, Zhou \cite{z15}, Zong et al. \cite{zwh15}, Zong and Huang \cite{zw16}, and the references therein. However, the existing literature are difficult to deal with one-sided Lipschitz and superlinearly drift. To fill the gap, in this paper, we are going to introduce a tamed theta scheme and  discuss the strong convergence of this scheme for NSDDEs in which the drift coefficients are one-sided Lipschitz and superlinearly.

The content of our paper is organized as follows. In section 2, we consider NSDDEs with global one-sided Lipschitz drift, the tamed theta scheme is introduced and strong convergence is investigated. We reveal that the tamed theta solution converges to the exact solution with order $\alpha$ (see (B1) below)under the global one-sided Lipschitz and the superlinearly growth condition. In section 3, the global one-sided Lipschitiz drift is replaced by the local one-sided Lipschitz drift, under which we show the convergence of the numerical solutions. In order to guarantee the well-posedness of the implicit tamed scheme, we impose a modified tamed theta scheme with a truncated skill.

%\section{\bf Neutral SDDEs Driven by Brownian Motion}

\section{\bf Global One-sided Lipschitz Drift}
For a fixed positive integer $n$, let
$(\mathbb{R}^n, \<\cdot,\cdot\>, |\cdot|)$ be an $n$-dimensional Euclidean
space. Denote $\mathbb{R}^n\otimes\mathbb{R}^d$ by the set of all $n\times d$
matrices  endowed with Hilbert-Schmidt norm
 $\|A\|:=\sqrt{\mbox{trace}(A^*A)}$ for every $A\in \mathbb{R}^n\otimes\mathbb{R}^d $, in which $A^*$ is the transpose of $A$. For a fixed $\tau\in(0,\infty)$, which will be referred
to as the delay or memory, let $\mathscr{C}=C([-\tau,0];\mathbb{R}^n)$ be all  continuous functions from $[-\tau,0]$ to $\mathbb{R}^n$, equipped
with the uniform norm
$\|\zeta\|_\infty:=\sup_{-\tau\le\theta\le 0}|\zeta(\theta)|$ for
every $\zeta\in\C$. By a filtered probability space, we mean a
quadruple $(\Omega, \mathscr{F}, \{\mathscr{F}_t\}_{t\ge 0}, \mathbb{P})$, where $\mathscr{F}$ is a
$\sigma$-algebra on the outcome space $\Omega$, $\mathbb{P}$ is a probability
measure on the measurable space $(\Omega,\mathscr{F})$, and $\{\mathscr{F}_t\}_{t\ge0}$ is
a filtration of sub-$\sigma$-algebra of $\mathscr{F}$, where the usual
conditions are satisfied, i.e., $(\Omega,\mathscr{F},\mathbb{P})$ is a complete
probability space, and  $\mathscr{F}_0$ contains all
$\mathbb{P}$-null sets of $\mathscr{F}$ and $\mathscr{F}_{t_+}:=\bigcap_{s>t}\mathscr{F}_s=\mathscr{F}_t.$
 Let $\{W(t)\}_{t\ge0}$ be a
$d$-dimensional Brownian motion defined on the filtered
 probability space $(\Omega,\mathscr{F},\{\mathscr{F}_t\}_{t\ge0}, \mathbb{P})$.

In this paper, we consider the following NSDDE
\begin{equation}\label{brownian}
\mbox{d}[X(t)-D(X(t-\tau))]=b(X(t), X(t-\tau))\mbox{d}t+\sigma(X(t), X(t-\tau))\mbox{d}W(t), t\ge 0
\end{equation}
with initial data
\begin{equation*}
X_0=\xi=\{\xi(\theta):-\tau\le\theta\le 0\}\in \mathcal{L}^p_{\mathscr{F}_0}([-\tau, 0]; \mathbb{R}^n), p\ge 2,
\end{equation*}
that is, $\xi$ is an $\mathscr{F}_0$-measurable $\mathscr{C}$-valued random variable such that $\mathbb{E}\|\xi\|^p_\infty<\infty$ for $p\ge 2$. Here, $D:\mathbb{R}^n\rightarrow\mathbb{R}^n$, and $b:\mathbb{R}^n\times\mathbb{R}^n\rightarrow\mathbb{R}^n$, $\sigma:\mathbb{R}^n\times\mathbb{R}^n\rightarrow\mathbb{R}^n\otimes\mathbb{R}^d$  are  continuous in $x$ and $y$. Fix $T>\tau>0$, assume that $T$ and $\tau$ are rational numbers, and the step size $\Delta\in (0,1)$ be fraction of $T$ and $\tau$, so that there exist two positive integers $M, m$ such that $\Delta=T/M=\tau/m$. Throughout the paper, we shall denote $C$ by a generic positive constant, whose value may change from line to line. Further, for any $x,y,\bar{x},\bar{y}\in\R^n$, we shall assume that:
\begin{enumerate}
\item[{\bf (A1)}]For any $s, t\in [-\tau, 0]$ and $q>0$, there exists a positive constant $K_1$ such that
\begin{equation*}
\mathbb{E}\|\xi(s)-\xi(t)\|_{\infty}^q\le K_1|s-t|^q.
\end{equation*}
\item[{\bf (A2)}]$D(0)=0$, and there exists a positive constant $\kappa\in (0,1/2)$ such that
\begin{equation*}
|D(x)-D(\bar{x})|\le\kappa|x-\bar{x}|.
\end{equation*}
\item[{\bf (A3)}]There exists a positive constant $K_2$ such that
\begin{equation*}
\< x-D(y), b(x, y)\>\vee\|\sigma(x, y)\|^2\le K_2(1+|x|^2+|y|^2).
\end{equation*}
\item[{\bf (A4)}]There exist positive constants $l$, $K_3$ and  $K_4$ such that for some $p\ge2$
\begin{equation*}
\begin{split}
&\quad 2\< x-D(y)-\bar{x}+D(\bar{y}), b(x, y)-b(\bar{x}, \bar{y})\>+(p-1)\|\sigma(x, y)-\sigma(\bar{x}, \bar{y})\|^2\\
&\le K_3(|x-\bar{x}|^2+|y-\bar{y}|^2),
\end{split}
\end{equation*}
and
\begin{equation*}
|b(x, y)-b(\bar{x}, \bar{y})|\le K_4(1+|x|^l+|\bar{x}|^l+|y|^l+|\bar{y}|^l)(|x-\bar{x}|+|y-\bar{y}|).
\end{equation*}
\end{enumerate}

\begin{rem}
{\rm Due to the existence of implicitness and the neutral term, scopes of $\Delta$ and $\kappa$ in assumption (A2) are given in order to guarantee rationality.}
 \end{rem}

\begin{rem}\label{remark2}
{\rm If $b(x, y)$ satisfies (A4), then,  for any $x,y\in\mathbb{R}^n$, we have
\begin{equation*}
\begin{split}
|b(x, y)|&\le|b(x,y)-b(0,0)|+|b(0,0)|\le K_4(1+|x|^l+|y|^l)(|x|+|y|)+|b(0,0)|\\
&\le C(1+|x|+|x|^{l+1}+|y|+|y|^{l+1}),
\end{split}
\end{equation*}
where $C=K_4\vee|b(0,0)|$. If the coefficients satisfy  (A2) and (A4), then one has
\begin{equation*}
\begin{split}
&\quad (p-1)\|\sigma(x, y)-\sigma(\bar{x}, \bar{y})\|^2\\
&\le K_3(|x-\bar{x}|^2+|y-\bar{y}|^2)+2|x-D(y)-\bar{x}+D(\bar{y})||b(x, y)-b(\bar{x}, \bar{y})|\\
&\le C(1+|x|^l+|\bar{x}|^l+|y|^l+|\bar{y}|^l)(|x-\bar{x}|^2+|y-\bar{y}|^2).
\end{split}
\end{equation*}
}
\end{rem}

\begin{rem}
{\rm There are many examples such that the assumptions can be  verified. For example, let
\begin{equation*}
\begin{split}
D(y)=-ay,\quad b(x,y)=x-x^3+ay-a^3y^3,\quad \sigma(x,y)=x+ay,
\end{split}
\end{equation*}
for $x,y\in\mathbb{R}$, where $a$ is a constant such that $|a|<1/2$. It is easy to check that assumptions (A2)-(A4) are satisfied.}
\end{rem}

\begin{lem}\label{exactexist}
{\rm Let (A1)-(A4) hold, the NSDDE \eqref{brownian} admits
a unique strong global solution $X(t), t\in[0, T]$, and
\begin{equation*}
\mathbb{E}\left(\sup\limits_{0\le t\le T}|X(t)|^p\right)\le C
\end{equation*}
for any $p\ge 2$. One can consult \cite{jiyuan16} for more details.
}
\end{lem}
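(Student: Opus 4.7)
The plan is to combine the classical step method on successive intervals of length $\tau$ with an It\^o-formula moment estimate for the neutral process $Y(t):=X(t)-D(X(t-\tau))$, following the standard treatment of NSDDEs in \cite{jiyuan16}.

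First, I would establish local existence and uniqueness interval by interval. On $[0,\tau]$ the delayed argument $X(t-\tau)=\xi(t-\tau)$ is a known $\mathscr{F}_t$-adapted continuous process, so \eqref{brownian} becomes the non-delay SDE
\begin{equation*}
X(t)=\xi(0)-D(\xi(-\tau))+D(\xi(t-\tau))+\int_0^t b(X(s),\xi(s-\tau))\,ds+\int_0^t\sigma(X(s),\xi(s-\tau))\,dW(s).
\end{equation*}
Assumption (A4) gives local Lipschitz continuity of the coefficients in the $x$-variable with polynomial modulus, while (A3) together with the growth estimate of Remark~\ref{remark2} rules out explosion; classical SDE theory then yields a unique strong solution on $[0,\tau]$. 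The construction is iterated on $[\tau,2\tau],\ldots$ up to $[0,T]$, using continuity of $X$ at the knots $k\tau$ to match initial data.

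Second, for the $p$-th moment bound, apply It\^o's formula to $|Y(t)|^p$. The drift integrand is $p|Y|^{p-2}\<Y,b(X(t),X(t-\tau))\>+\tfrac{p(p-1)}{2}|Y|^{p-2}\|\sigma(X(t),X(t-\tau))\|^2$. Assumption (A3) bounds the inner product by $K_2(1+|X(t)|^2+|X(t-\tau)|^2)$ and the diffusion norm by the same quantity; combined with Burkholder--Davis--Gundy on the martingale part and the routine estimate $|Y|^{p-2}(1+|X|^2+|X(\cdot-\tau)|^2)\le C(1+|Y|^p+|X|^p+|X(\cdot-\tau)|^p)$, this produces
\begin{equation*}
\E\sup_{0\le s\le t}|Y(s)|^p\le C(1+\E\|\xi\|_\infty^p)+C\int_0^t\E\sup_{-\tau\le u\le s}|X(u)|^p\,ds.
\end{equation*}
Since $X=Y+D(X(\cdot-\tau))$ and $|D(x)|\le\kk|x|$ by (A2), Young's inequality $(a+b)^p\le(1+\vv)^{p-1}a^p+(1+1/\vv)^{p-1}b^p$ gives
\begin{equation*}
\E\sup_{0\le s\le t}|X(s)|^p\le(1+\vv)^{p-1}\E\sup_{0\le s\le t}|Y(s)|^p+(1+1/\vv)^{p-1}\kk^p\bigl(\E\|\xi\|_\infty^p+\E\sup_{0\le s\le t}|X(s)|^p\bigr).
\end{equation*}
Because $\kk<1/2$, $\vv$ can be chosen so that $(1+1/\vv)^{p-1}\kk^p<1$; absorbing the $X$-term and applying Gronwall's inequality to $\varphi(t):=\E\sup_{-\tau\le s\le t}|X(s)|^p$ gives $\varphi(T)\le C$, which is the claim.

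The main obstacle is precisely this absorption step: the neutral term couples $\E\sup_{0\le s\le t}|X(s)|^p$ to itself through $|X(t-\tau)|^p$, and the decoupling relies on the interplay between the contraction constant $\kk$ in (A2) and the Young exponent $\vv$. A secondary care point is that It\^o's formula is first applied up to a sequence of stopping times $\tau_N:=\inf\{t\ge 0:|X(t)|\ge N\}$ to keep the local martingale a true martingale; the uniform bound on $\varphi$ then lets one send $N\to\infty$ and recover global existence. Uniqueness follows by the same It\^o argument applied to the difference of two solutions, the dissipativity part of (A4) supplying a Gronwall inequality in place of (A3).
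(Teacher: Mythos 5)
Your sketch is correct and is the standard argument for NSDDE well-posedness: the step method over intervals of length $\tau$ using the monotonicity in (A4) and growth in (A3), then It\^o's formula on $|X(t)-D(X(t-\tau))|^p$ with BDG, followed by decoupling the neutral term through $|D(x)|\le\kappa|x|$ with $\kappa<1/2$ and Gronwall. The paper itself gives no proof of this lemma --- it simply defers to the reference \cite{jiyuan16} --- and your outline matches the route taken there, so there is nothing to compare beyond noting that your absorption and localization steps are exactly the points that need care and are handled correctly.
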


\subsection{The Tamed Theta Scheme}
Now we introduce a tamed theta scheme for \eqref{brownian}. For $k=-m, \cdots, 0$, set $y_{t_k}=\xi(k\Delta)$; For $k=0, 1, \cdots, M-1$, we form
\begin{equation}\label{discrete1}
\begin{split}
y_{t_{k+1}}-D(y_{t_{k+1-m}})&=y_{t_k}-D(y_{t_{k-m}})+\theta b_\Delta(y_{t_{k+1}}, y_{t_{k+1-m}})\Delta\\
&\quad+(1-\theta) b_\Delta(y_{t_{k}}, y_{t_{k-m}})\Delta+\sigma_\Delta(y_{t_{k}}, y_{t_{k-m}})\Delta W_{t_k},
\end{split}
\end{equation}
where $t_k=k\Delta$, and $\Delta W_{t_k}=W(t_{k+1})-W(t_k)$. Here $b_\Delta:\mathbb{R}^n\times\mathbb{R}^n\rightarrow\mathbb{R}^n$ is a continuous function, and  $\sigma_\Delta:\mathbb{R}^n\times\mathbb{R}^n\rightarrow\mathbb{R}^n\otimes\mathbb{R}^d$ is a measurable function, $b_\Delta$ and $\sigma_\Delta$ satisfy some conditions given below. Besides, $\theta\in [0,1]$ is an additional parameter that allows us to control the implicitness of the numerical scheme.
Since it is convenient to work with a continuous extension of a numerical method, we now define the equivalent continuous form for \eqref{discrete1}. Let $Y_{\Delta}(t)=\xi(t), t\in[-\tau,0]$. For $t\in[0,T]$,  we   define the corresponding continuous-time tamed theta scheme by
\begin{equation*}
\begin{split}
Y_{\Delta}(t)=&D(\bar{Y}_{\Delta}(t-\tau))+\xi(0)-D(\xi(-\tau))+\theta\int_0^{t} b_\Delta(\bar{Y}_{\Delta+}(s), \bar{Y}_{\Delta+}(s-\tau))\mbox{d}s\\
&+(1-\theta)\int_0^{t} b_\Delta(\bar{Y}_{\Delta}(s), \bar{Y}_{\Delta}(s-\tau))\mbox{d}s+\int_0^{t} \sigma_\Delta(\bar{Y}_{\Delta}(s), \bar{Y}_{\Delta}(s-\tau))\mbox{d}W(s),
\end{split}
\end{equation*}
here $\bar{Y}_{\Delta}(t)$ is defined by
\begin{equation}\label{barytdef}
\bar{Y}_{\Delta}(t)=y_{t_k}  \mbox{ and } \bar{Y}_{\Delta+}(t)=y_{t_{k+1}} \quad \mbox{for} \quad t \in[t_k, t_{k+1}) ,
\end{equation}
thus $\bar{Y}_{\Delta}(t-\tau)=y_{t_{k-m}}$, and $\bar{Y}_{\Delta+}(t-\tau)=y_{t_{k+1-m}}$. However, this $Y_{\Delta}(t)$ is not $\mathscr{F}_t$-adapted, it does not meet the fundamental requirement in the It\^o stochastic analysis. To avoid Malliavin calculus, we use the discrete split-step theta scheme introduced by Zong et al. \cite{zwh15} as follows: For $k=-m, \cdots, -1$, set $z_{t_k}=\xi(k\Delta)$.  For $k=0, 1, \cdots, M-1$, we reformulate  the scheme \eqref{discrete1} as follows
\begin{equation}\label{discrete2}
\begin{cases}
y_{t_{k}}=D(y_{t_{{k}-m}})+z_{t_{k}}-D(z_{t_{{k}-m}})+\theta b_\Delta(y_{t_{k}},y_{t_{{k}-m}})\Delta,\\
z_{t_{k+1}}=D(z_{t_{k+1-m}})+z_{t_k}-D(z_{t_{k-m}})+b_\Delta(y_{t_k},y_{t_{k-m}})\Delta+\sigma_\Delta(y_{t_k},y_{t_{k-m}})\Delta W_{t_k}.
\end{cases}
\end{equation}
 This scheme  can also be rewritten as
\begin{equation*}
\begin{split}
& z_{t_{k+1}}-D(z_{t_{k+1-m}})=z_{t_0}-D(z_{t_{-m}})+\sum\limits_{i=0}^kb_\Delta(y_{t_i},y_{t_{i-m}})\Delta+\sum\limits_{i=0}^k\sigma_\Delta(y_{t_i},y_{t_{i-m}})\Delta W_{t_i}\\
=&\xi(0)-D(\xi(-\tau))-\theta b_\Delta(\xi(0),\xi(-\tau))\Delta+\sum\limits_{i=0}^kb_\Delta(
y_{t_i},y_{t_{i-m}})\Delta+\sum\limits_{i=0}^k\sigma_\Delta(y_{t_i},y_{t_{i-m}})\Delta W_{t_i}.
\end{split}
\end{equation*}
In order to simplify the computation, we define the corresponding continuous-time split-step tamed theta solution $Z_\Delta(t)$ as follows: For any $t\in[-\tau,0)$, $Z_\Delta(t)=\xi(t)$, $Z_\Delta(0)=\xi(0)-\theta b_\Delta(\xi(0), \xi(-\tau))\Delta$.  For any $t\in[0,T]$,
\begin{equation}\label{continuous}
\mbox{d}[Z_\Delta(t)-D(Z_\Delta(t-\tau))]=b_\Delta(\bar{Y}_{\Delta}(t), \bar{Y}_{\Delta}(t-\tau))\mbox{d}t+\sigma_\Delta(\bar{Y}_{\Delta}(t), \bar{Y}_{\Delta}(t-\tau))\mbox{d}W(t),
\end{equation}
where $\bar{Y}_{\Delta}(t)$ is defined by \eqref{barytdef}. With the split-step tamed theta scheme \eqref{discrete2}, the continuous form of the split-step tamed theta solution $Z_\Delta(t)$ and the tamed theta solution $Y_{\Delta}(t)$ have the following relation:
\begin{equation*}
Y_{\Delta}(t)-D(Y_{\Delta}(t-\tau))-\theta b_\Delta(Y_{\Delta}(t),Y_{\Delta}(t-\tau))\Delta=Z_\Delta(t)-D(Z_\Delta(t-\tau)).
\end{equation*}
Denote $\tilde{Y}_{\Delta}(t)=Y_{\Delta}(t)-D(Y_{\Delta}(t-\tau))-\theta b_\Delta(Y_{\Delta}(t),Y_{\Delta}(t-\tau))\Delta$, we can rewrite \eqref{continuous} as
\begin{equation}\label{ycontinuous}
\tilde{Y}_{\Delta}(t)=\tilde{Y}_{\Delta}(0)+\int_0^tb_\Delta(\bar{Y}_{\Delta}(s), \bar{Y}_{\Delta}(s-\tau))\mbox{d}s+\int_0^t\sigma_\Delta(\bar{Y}_{\Delta}(s), \bar{Y}_{\Delta}(s-\tau))\mbox{d}W(s), t\in[0,T],
\end{equation}
where $\tilde{Y}_{\Delta}(0)=\xi(0)-D(\xi(-\tau))-\theta b_\Delta(\xi(0),\xi(-\tau))\Delta$. It is easy to see $\tilde{Y}_{\Delta}(t)$ coincides with $\bar{Y}_{\Delta}(t)-D(\bar{Y}_{\Delta}(t-\tau))-\theta b_\Delta(\bar{Y}_{\Delta}(t),\bar{Y}_{\Delta}(t-\tau))\Delta$ at grid points $t=k\Delta, k=0,1,\cdots,M-1$, this also means that the continuous-time tamed theta solution $Y_{\Delta}(t)$ coincides with the discrete-time tamed theta solution $\bar{Y}_{\Delta}(t)$ at grid points $t=k\Delta, k=0,1,\cdots,M-1$.

We need some assumptions on $b_\Delta(x,y)$ and $ \sigma_\Delta(x,y)$. We assume that there exists an $\alpha\in(0, 1/2]$ such that for any $x, y, \bar{x}, \bar{y} \in\R^n$, the following conditions hold:
\begin{enumerate}
\item[{\bf (B1)}]There exist a positive constant $K_5\ge 1$ such that
\begin{equation*}
|b_\Delta(x,y)|\le \min(K_5\Delta^{-\alpha}(1+|x|+|y|), |b(x,y)|),
\end{equation*}
and
\begin{equation*}
\|\sigma_\Delta(x,y)\|^2\le \min(K_5\Delta^{-\alpha}(1+|x|^2+|y|^2), \|\sigma(x,y)\|^2).
\end{equation*}
\item[{\bf (B2)}]There exists a positive constant $\tilde{K}_2$ such that
\begin{equation*}
\< x-D(y), b_\Delta(x,y)\>\vee\|\sigma_\Delta(x,y)\|^2\le \tilde{K}_2(1+|x|^2+|y|^2).
\end{equation*}
\item[{\bf (B3)}]There exists a positive constant $\tilde{K}_3$ such that
\begin{equation*}
\begin{split}
&\quad \< x-D(y)-\bar{x}+D(\bar{y}), b_\Delta(x, y)-b_\Delta(\bar{x}, \bar{y})\>\le \tilde{K}_3(|x-\bar{x}|^2+|y-\bar{y}|^2).
\end{split}
\end{equation*}
\item[{\bf (B4)}]There exist positive constants $l$, $K_6$ such that for $p\ge2$
\begin{equation*}
\begin{split}
|b(x,y)-b_\Delta(x,y)|^{p}\vee\|\sigma(x,y)-\sigma_\Delta(x,y)\|^{p}\le K_6\Delta^{\alpha p}[1+|x|^{(2l+1)p}+|y|^{(2l+1)p}].
\end{split}
\end{equation*}
\end{enumerate}

\begin{rem}
{\rm With assumptions (B1)-(B3), \eqref{discrete1} is well defined under some constraints on time step $\Delta$. It's worth pointing out that the assumption (B3) is merely used to guarantee the uniqueness of numerical solutions. In fact, (B2) can be derived from (A2), (A3),  (B1) and (B3), however, since (B2) will be used  in the proof  and can be replaced by a weaker form (see Remark \ref{weaker} for more details) while (B3) is not needed in moment estimation and strong convergence, we impose (B2) there.
}
\end{rem}
In order to ensure the implicitness of scheme \eqref{discrete1} is well defined, an additional restriction is required on time step, i.e. $\theta\Delta\tilde{K}_3<1$, where $\tilde{K}_3$ is defined in (B3) (see \cite{zei} for more details). For $\theta\in(0,1]$, denote $\Delta_1=\frac{1}{\theta\tilde{K}_3}$. Further, in order to guarantee the boundedness of the $p$-th moment of numerical solutions, the step size is also required  to satisfy $\theta^p\Delta<6^{1-p}(2^{-p}-\kappa^p)/K_5^p$ for $p\ge 2$ where $\kappa$ and $K_5$ are defined  in (A2) and (B1). Denote $\Delta_2=6^{1-p}(2^{-p}-\kappa^p)/(\theta^p K_5^p)$ for $\theta\in(0,1]$. Thus in this section, we set $\Delta^*\in(0,\Delta_1\wedge\Delta_2)$, and let $0<\Delta\le\Delta^*$ for $\theta\in(0,1]$, while for $\theta=0$, we may set $\Delta\in(0,1)$.

\begin{rem}
{\rm Under conditions (A2)-(A4), the set of sequences of functions which satisfy (B1)-(B4) are non-empty. For example, let  $b(x, y), \sigma(x, y): \mathbb{R}\times\mathbb{R} \rightarrow \mathbb{R}$, define
\begin{equation*}
b_\Delta(x,y)=\frac{b(x,y)}{1+\Delta^{\alpha}|b(x,y)|},
\end{equation*}
and
\begin{equation*}
\sigma_\Delta(x,y)=\frac{\sigma(x,y)}{1+\Delta^{\alpha}\|\sigma(x,y)\|^2}.
\end{equation*}
It is easy to see $|b_\Delta(x, y)|\le |b(x,y)|$, and on the other hand, we have
\begin{equation*}
|b_\Delta(x,y)|=\Delta^{-\alpha}\frac{|b(x,y)|}{\Delta^{-\alpha}+|b(x,y)|}\le \Delta^{-\alpha}\le K_5\Delta^{-\alpha}(1+|x|+|y|),
\end{equation*}
and
\begin{equation*}
\|\sigma_\Delta(x,y)\|^2=\Delta^{-\alpha}\frac{\Delta^{-\alpha}\|\sigma(x,y)\|^2}{(\Delta^{-\alpha}+\|\sigma(x,y)\|^2)^2}\le \Delta^{-\alpha}\le K_5\Delta^{-\alpha}(1+|x|^2+|y|^2).
\end{equation*}
That is, (B1) is verified. Furthermore, due to (A3), we have
\begin{equation*}
\begin{split}
&\quad \< x-D(y), b_\Delta(x,y)\>=\frac{1}{1+\Delta^{\alpha}|b(x,y)|}\< x-D(y), b(x,y)\>\\
&\le K_2(1+|x|^2+|y|^2),
\end{split}
\end{equation*}
and
\begin{equation*}
\begin{split}
&\|\sigma_\Delta(x,y)\|^2\le\|\sigma(x,y)\|^2\le K_2(1+|x|^2+|y|^2).
\end{split}
\end{equation*}
Then, we see (B2) holds. We are now going to check (B3), we divide it into two cases. For $b(x,y)\cdot b(\bar{x},\bar{y})<0$,
\begin{equation*}
\begin{split}
&\quad \< x-D(y)-\bar{x}+D(\bar{y}), b_\Delta(x,y)-b_\Delta(\bar{x},\bar{y})\>\\
=&\left\< x-D(y)-\bar{x}+D(\bar{y}), \frac{b(x,y)}{1+\Delta^{\alpha}|b(x,y)|}-\frac{b(\bar{x},\bar{y})}{1+\Delta^{\alpha}|b(\bar{x},\bar{y})|}\right\>\\
\le& \frac{1}{2}K_3(|x-\bar{x}|^2+|y-\bar{y}|^2).
\end{split}
\end{equation*}
For $b(x,y)\cdot b(\bar{x},\bar{y})>0$,
\begin{equation*}
\begin{split}
&\quad \< x-D(y)-\bar{x}+D(\bar{y}), b_\Delta(x,y)-b_\Delta(\bar{x},\bar{y})\>\\
=&\left\< x-D(y)-\bar{x}+D(\bar{y}), \frac{b(x,y)-b(\bar{x},\bar{y})}{(1+\Delta^{\alpha}|b(x,y)|)(1+\Delta^{\alpha}|b(\bar{x},\bar{y})|)}\right\>\\
&+\left\< x-D(y)-\bar{x}+D(\bar{y}), \frac{\Delta^\alpha [b(x,y)|b(\bar{x},\bar{y})|-|b(x,y)|b(\bar{x},\bar{y})]}{(1+\Delta^{\alpha}|b(x,y)|)(1+\Delta^{\alpha}|b(\bar{x},\bar{y})|)}\right\>\\
\le& \frac{1}{2}K_3(|x-\bar{x}|^2+|y-\bar{y}|^2).
\end{split}
\end{equation*}
Due to Remark \ref{remark2}, we see that
\begin{equation*}
|b(x,y)-b_\Delta(x,y)|^{p}\le \Delta^{\alpha p}|b(x,y)|^{2p}\le K_6\Delta^{\alpha p}\left[1+|x|^{2(l+1)p}+|y|^{2(l+1)p}\right].
\end{equation*}
Similarly, we can also verify that $\sigma(x,y), \sigma_\Delta(x,y)$ satify (B4).
}
\end{rem}

\subsection{\bf Moment Bounds}

In order to prove the main results,   we now give some estimates for the numerical solution $Y_{\Delta}(t).$

\begin{lem}\label{pmoment}
{\rm
Let (A1)-(A2) and (B1)-(B2) hold. Then  it  holds that for any $p\ge 2$,
\begin{equation*}\label{ytpmoment}
\sup\limits_{0\le t \le T}\mathbb{E}|Y_{\Delta}(t)|^p\le C,
\end{equation*}
where the positive constant $C$ is independent of $\Delta$.
}
\end{lem}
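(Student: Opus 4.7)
The plan is to first control $\mathbb{E}|\tilde{Y}_\Delta(t)|^p$ via It\^o's formula applied to the semimartingale representation \eqref{ycontinuous}, and then invert the relation $Y_\Delta(t)=\tilde{Y}_\Delta(t)+D(Y_\Delta(t-\tau))+\theta\Delta b_\Delta(Y_\Delta(t),Y_\Delta(t-\tau))$ to extract a bound on $\mathbb{E}|Y_\Delta(t)|^p$, using (A2) together with the step-size constraint $\Delta\le\Delta_2$ to absorb the implicit term. A final induction over the delay windows $[0,\tau], [\tau,2\tau],\ldots$ closes the loop by feeding the bound on the previous window (or the initial data on the first window) back into the right-hand side.

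For the It\^o step I apply the formula to $|\tilde{Y}_\Delta(t)|^p$. To exploit (B2) in the drift inner product $\langle\tilde{Y}_\Delta(s),b_\Delta(\bar{Y}_\Delta(s),\bar{Y}_\Delta(s-\tau))\rangle$ I decompose, on each subinterval $[t_k,t_{k+1})$,
\begin{equation*}
\tilde{Y}_\Delta(s)=\bigl(\bar{Y}_\Delta(s)-D(\bar{Y}_\Delta(s-\tau))-\theta\Delta b_\Delta(\bar{Y}_\Delta(s),\bar{Y}_\Delta(s-\tau))\bigr)+R(s),
\end{equation*}
where $R(s)=(s-t_k)b_\Delta(\bar{Y}_\Delta(s),\bar{Y}_\Delta(s-\tau))+\sigma_\Delta(\bar{Y}_\Delta(s),\bar{Y}_\Delta(s-\tau))(W(s)-W(t_k))$. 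The principal piece pairs with $b_\Delta$ to give the bound $\tilde{K}_2(1+|\bar{Y}_\Delta(s)|^2+|\bar{Y}_\Delta(s-\tau)|^2)$ by (B2); the cross term $-\theta\Delta|b_\Delta|^2$ is non-positive and is discarded; and $\langle R(s),b_\Delta\rangle$ is controlled by Cauchy--Schwarz together with (B1), whose $\Delta^{-\alpha}$ factors remain harmless because $\alpha\le 1/2$. The diffusion contribution $\|\sigma_\Delta\|^2$ is handled directly by (B2). Taking expectations gives an inequality of the form
\begin{equation*}
\mathbb{E}|\tilde{Y}_\Delta(t)|^p\le C+C\int_0^t\bigl(\mathbb{E}|\tilde{Y}_\Delta(s)|^p+\mathbb{E}|\bar{Y}_\Delta(s)|^p+\mathbb{E}|\bar{Y}_\Delta(s-\tau)|^p\bigr)\mbox{d}s,
\end{equation*}
amenable to Gronwall once the $Y_\Delta$/$\tilde{Y}_\Delta$ transfer described next is in place.

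The transfer step starts from $|Y_\Delta(t)|^p\le 3^{p-1}(|\tilde{Y}_\Delta(t)|^p+\kappa^p|Y_\Delta(t-\tau)|^p+\theta^p\Delta^p|b_\Delta(Y_\Delta(t),Y_\Delta(t-\tau))|^p)$ via (A2). Applying (B1) to the last summand and using $\Delta^{p(1-\alpha)}\le\Delta$ for $p\ge 2$ and $\alpha\le 1/2$ bounds it by a constant multiple of $\theta^pK_5^p\Delta(1+|Y_\Delta(t)|^p+|Y_\Delta(t-\tau)|^p)$; the choice $\Delta_2=6^{1-p}(2^{-p}-\kappa^p)/(\theta^pK_5^p)$ is exactly what forces the resulting coefficient of $|Y_\Delta(t)|^p$ on the right to be strictly less than $1$ (it evaluates to $(3/2)^{p-1}(2^{-p}-\kappa^p)<1$ since $\kappa<1/2$). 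Absorbing this term on the left yields $|Y_\Delta(t)|^p\le C_1|\tilde{Y}_\Delta(t)|^p+C_2|Y_\Delta(t-\tau)|^p+C_3$. Substituting this back into the $\tilde{Y}_\Delta$ bound and iterating window-by-window, using $\mathbb{E}\|\xi\|_\infty^p<\infty$ on the initial window, delivers $\sup_{0\le t\le T}\mathbb{E}|Y_\Delta(t)|^p\le C$ with $C$ independent of $\Delta$.

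The main obstacle I anticipate is engineering this absorption so that the coefficient of $|Y_\Delta(t)|^p$ on the right really falls below $1$; this is where the quantitative choice of $\Delta_2$ and the neutral constant $\kappa<1/2$ from (A2) are both indispensable. A secondary technical point is controlling the It\^o remainder $\langle R(s),b_\Delta\rangle$ in spite of the $\Delta^{-\alpha}$ blow-up allowed by (B1); the constraint $\alpha\le 1/2$ is precisely what keeps this piece of order $O(\Delta^{1-2\alpha})$ rather than divergent.
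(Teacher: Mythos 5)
Your overall architecture matches the paper's: It\^o's formula applied to (a function of) $\tilde{Y}_\Delta$, the decomposition $\tilde{Y}_\Delta(s)=\vec{Y}_\Delta(s)+R(s)$ with $\vec{Y}_\Delta(s)=\bar{Y}_{\Delta}(s)-D(\bar{Y}_{\Delta}(s-\tau))-\theta\Delta b_\Delta(\bar{Y}_{\Delta}(s),\bar{Y}_{\Delta}(s-\tau))$ so that (B2) applies to the principal piece and the $-\theta\Delta|b_\Delta|^2$ cross term can be discarded, and then the transfer back to $Y_\Delta$ via (A2), (B1) and the constraint $\Delta\le\Delta_2$ followed by Gronwall. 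The transfer step and the closing argument are sound (your window-by-window induction is a legitimate alternative to the paper's one-shot absorption of the delay term, at the cost of a constant growing like $C^{T/\tau}$, which is still $\Delta$-independent).

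However, there is a genuine gap in your treatment of $\langle R(s),b_\Delta\rangle$. The remainder $R(s)$ contains the martingale increment $\sigma_\Delta(\bar{Y}_{\Delta}(s),\bar{Y}_{\Delta}(s-\tau))(W(s)-W(t_k))$, and if you estimate its pairing with $b_\Delta$ by Cauchy--Schwarz as you propose, (B1) gives $\|\sigma_\Delta\|\lesssim\Delta^{-\alpha/2}$, $|W(s)-W(t_k)|\lesssim\Delta^{1/2}$ in $L^2$, and $|b_\Delta|\lesssim\Delta^{-\alpha}$, so this contribution is of order $\Delta^{1/2-3\alpha/2}$, not $O(\Delta^{1-2\alpha})$; for $\alpha\in(1/3,1/2]$ it diverges as $\Delta\to0$, and Young-type rebalancings do not repair it. Only the drift part of $R(s)$ is genuinely $O(\Delta^{1-2\alpha})$. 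To control the martingale part one must exploit cancellation: the paper replaces the weight $[1+|\tilde{Y}_{\Delta}(s)|^{2}]^{\frac{p-2}{2}}$ by the $\mathscr{F}_{\lfloor s/\Delta\rfloor\Delta}$-measurable weight $[1+|\vec{Y}_\Delta(s)|^{2}]^{\frac{p-2}{2}}$, under which the conditional expectation of the stochastic-integral term vanishes (this is the term $E_{31}$), and then controls the difference of the two weights by a second application of It\^o's formula over $[t_k,s]$, whose extra factor of length $\le\Delta$ or whose own martingale structure renders every resulting product integrable (the terms $E_{321}$, $E_{322}$, $E_{323}$). Without this adapted-weight device, or some equivalent use of the martingale property, your estimate fails precisely in the regime $\alpha$ close to $1/2$ that the lemma is meant to cover. (A minor arithmetic point: with $\Delta_2=6^{1-p}(2^{-p}-\kappa^p)/(\theta^pK_5^p)$ the absorbed coefficient is $2^{1-p}(2^{-p}-\kappa^p)$ times a constant, not $(3/2)^{p-1}(2^{-p}-\kappa^p)$; the conclusion of the transfer step is unaffected.)
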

\begin{proof}
For $a>0$, let $\lfloor a\rfloor$ be the integer part of $a$. Applying the It\^{o} formula to $[1+|\tilde{Y}_{\Delta}(t)|^{2}]^{\frac{p}{2}}$, we obtain
\begin{equation*}
\begin{split}
\mathbb{E}[1+|\tilde{Y}_{\Delta}(t)|^{2}]^{\frac{p}{2}}\le&\mathbb{E}[1+|\tilde{Y}_{\Delta}(0)|^{2}]^{\frac{p}{2}}+\frac{p}{2}\mathbb{E}\int_0^t[1+|\tilde{Y}_{\Delta}(s)|^{2}]^{\frac{p-2}{2}}2\langle \tilde{Y}_{\Delta}(s), b_\Delta(\bar{Y}_{\Delta}(s),\bar{Y}_{\Delta}(s-\tau))\rangle\mbox{d}s\\
&+\frac{1}{2}p(p-1)\mathbb{E}\int_0^t[1+|\tilde{Y}_{\Delta}(s)|^{2}]^{\frac{p-2}{2}}\|\sigma_\Delta(\bar{Y}_{\Delta}(s),\bar{Y}_{\Delta}(s-\tau))\|^2\mbox{d}s\\
\le&\mathbb{E}[1+|\tilde{Y}_{\Delta}(0)|^{2}]^{\frac{p}{2}}+\frac{p}{2}\mathbb{E}\int_0^t[1+|\tilde{Y}_{\Delta}(s)|^{2}]^{\frac{p-2}{2}}(p-1)\|\sigma_\Delta(\bar{Y}_{\Delta}(s),\bar{Y}_{\Delta}(s-\tau))\|^2\mbox{d}s\\
&+\frac{p}{2}\mathbb{E}\int_0^t[1+|\tilde{Y}_{\Delta}(s)|^{2}]^{\frac{p-2}{2}}2\langle\bar{Y}_{\Delta}(s)-D(\bar{Y}_{\Delta}(s-\tau)), b_\Delta(\bar{Y}_{\Delta}(s),\bar{Y}_{\Delta}(s-\tau))\rangle\mbox{d}s\\
&+p\mathbb{E}\int_0^t[1+|\tilde{Y}_{\Delta}(s)|^{2}]^{\frac{p-2}{2}}\langle \tilde{Y}_{\Delta}(s)-\vec{Y}_\Delta(s), b_\Delta(\bar{Y}_{\Delta}(s),\bar{Y}_{\Delta}(s-\tau))\rangle\mbox{d}s\\
=: &\mathbb{E}[1+|\tilde{Y}_{\Delta}(0)|^{2}]^{\frac{p}{2}}+E_1(t)+E_2(t)+E_3(t),
\end{split}
\end{equation*}
where $\vec{Y}_{\Delta}(t)=\bar{Y}_{\Delta}(t)-D(\bar{Y}_{\Delta}(t-\tau))-\theta b_\Delta(\bar{Y}_{\Delta}(t),\bar{Y}_{\Delta}(t-\tau))\Delta$. With conditions (A2), (B1)-(B2), we have
\begin{equation*}
\begin{split}
E_1(t)+E_2(t)\le &C\mathbb{E}\int_0^t[1+|\tilde{Y}_{\Delta}(s)|^{2}]^{\frac{p-2}{2}}(1+|\bar{Y}_{\Delta}(s)|^2+|\bar{Y}_{\Delta}(s-\tau)|^2)\mbox{d}s\\
\le &C \mathbb{E}\int_0^t[[1+|\tilde{Y}_{\Delta}(s)|^{2}]^{\frac{p}{2}}+|\bar{Y}_{\Delta}(s)|^p+|\bar{Y}_{\Delta}(s-\tau)|^p]\mbox{d}s\\
\le &C \mathbb{E}\int_0^t\big[|Y_{\Delta}(s)|^p+|Y_{\Delta}(s-\tau)|^p+|\theta b_\Delta(Y_{\Delta}(s),Y_{\Delta}(s-\tau))\Delta|^p\\
&~~~~~~~~~~~~~~~~~~~~~~~~~~~~+|\bar{Y}_{\Delta}(s)|^p+|\bar{Y}_{\Delta}(s-\tau)|^p\big]\mbox{d}s\\
\le&C \mathbb{E}\int_0^t(|Y_{\Delta}(s)|^{p}+|Y_{\Delta}(s-\tau)|^p+|\bar{Y}_{\Delta}(s)|^p+|\bar{Y}_{\Delta}(s-\tau)|^p)\mbox{d}s\\
&+C\Delta^{(1-\alpha)p}\mathbb{E}\int_0^t(1+|Y_{\Delta}(s)|^{p}+|Y_{\Delta}(s-\tau)|^p)\mbox{d}s\\
\le&C+C\int_0^t\sup\limits_{0\le u\le s}\mathbb{E}|Y_{\Delta}(u)|^p\mbox{d}s.
\end{split}
\end{equation*}
Furthermore, it is easy to observe that,
\begin{equation*}
\begin{split}
&E_3(t)=p\mathbb{E}\int_0^t[1+|\vec{Y}_\Delta(s)|^{2}]^{\frac{p-2}{2}}\langle \tilde{Y}_{\Delta}(s)-\vec{Y}_\Delta(s), b_\Delta(\bar{Y}_{\Delta}(s),\bar{Y}_{\Delta}(s-\tau))\rangle\mbox{d}s\\
&+p\mathbb{E}\int_0^t\left\{[1+|\tilde{Y}_{\Delta}(s)|^{2}]^{\frac{p-2}{2}}-[1+|\vec{Y}_\Delta(s)|^{2}]^{\frac{p-2}{2}}\right\}\langle \tilde{Y}_{\Delta}(s)-\vec{Y}_\Delta(s), b_\Delta(\bar{Y}_{\Delta}(s),\bar{Y}_{\Delta}(s-\tau))\rangle\mbox{d}s\\
&=:pE_{31}(t)+pE_{32}(t),
\end{split}
\end{equation*}
where
\begin{equation*}
\tilde{Y}_{\Delta}(s)-\vec{Y}_\Delta(s)=\int_{\lfloor\frac{s}{\Delta}\rfloor\Delta}^sb_\Delta(\bar{Y}_{\Delta}(u),\bar{Y}_{\Delta}(u-\tau))\mbox{d}u+\int_{\lfloor\frac{s}{\Delta}\rfloor\Delta}^s\sigma_\Delta(\bar{Y}_{\Delta}(u),\bar{Y}_{\Delta}(u-\tau))\mbox{d}W(u).
\end{equation*}
Due to (B1) and the Young inequality,
\begin{equation*}
\begin{split}
&E_{31}(t)=\mathbb{E}\int_0^t[1+|\vec{Y}_\Delta(s)|^{2}]^{\frac{p-2}{2}}\bigg\langle\int_{\lfloor\frac{s}{\Delta}\rfloor\Delta}^sb_\Delta(\bar{Y}_{\Delta}(u),\bar{Y}_{\Delta}(u-\tau))\mbox{d}u,b_\Delta(\bar{Y}_{\Delta}(s),\bar{Y}_{\Delta}(s-\tau))\bigg\rangle\mbox{d}s\\
&+\mathbb{E}\int_0^t[1+|\vec{Y}_\Delta(s)|^{2}]^{\frac{p-2}{2}}\bigg\langle\mathbb{E}\int_{\lfloor\frac{s}{\Delta}\rfloor\Delta}^s\sigma_\Delta(\bar{Y}_{\Delta}(u),\bar{Y}_{\Delta}(u-\tau))\mbox{d}W(u)\bigg|_{\mathscr{F}_{\lfloor\frac{s}{\Delta}\rfloor\Delta}},b_\Delta(\bar{Y}_{\Delta}(s),\bar{Y}_{\Delta}(s-\tau))\bigg\rangle\mbox{d}s\\
&\le\mathbb{E}\int_0^t[1+|\vec{Y}_\Delta(s)|^{2}]^{\frac{p-2}{2}}\int_{\lfloor\frac{s}{\Delta}\rfloor\Delta}^s|b_\Delta(\bar{Y}_{\Delta}(u),\bar{Y}_{\Delta}(u-\tau))|\mbox{d}u|b_\Delta(\bar{Y}_{\Delta}(s),\bar{Y}_{\Delta}(s-\tau))|\mbox{d}s\\
&\le \Delta\mathbb{E}\int_0^t[1+|\vec{Y}_\Delta(s)|^{2}]^{\frac{p-2}{2}}|b_\Delta(\bar{Y}_{\Delta}(s),\bar{Y}_{\Delta}(s-\tau))|^2\mbox{d}s\\
&\le C\Delta^{1-2\alpha}\mathbb{E}\int_0^t(1+|\bar{Y}_{\Delta}(s)|^p+|\bar{Y}_{\Delta}(s-\tau)|^p)\mbox{d}s\\
&+C\Delta^{1-2\alpha}\Delta^{(1-\alpha)p}\mathbb{E}\int_0^t(1+|\bar{Y}_{\Delta}(s)|^{p}+|\bar{Y}_{\Delta}(s-\tau)|^p)\mbox{d}s\\
&\le C+C\int_0^t\sup\limits_{0\le u\le s}\mathbb{E}|Y_{\Delta}(u)|^p\mbox{d}s.
\end{split}
\end{equation*}
Applying the It\^{o} formula again, we obtain
\begin{equation*}
\begin{split}
&[1+|\tilde{Y}_{\Delta}(s)|^{2}]^{\frac{p-2}{2}}\\
&\le[1+|\tilde{Y}_{\Delta}(0)|^{2}]^{\frac{p-2}{2}}+(p-2)\int_0^s[1+|\tilde{Y}_{\Delta}(u)|^{2}]^{\frac{p-4}{2}}\langle \tilde{Y}_{\Delta}(u), b_\Delta(\bar{Y}_{\Delta}(u),\bar{Y}_{\Delta}(u-\tau))\rangle\mbox{d}u\\
&+\frac{1}{2}(p-2)(p-3)\int_0^s[1+|\tilde{Y}_{\Delta}(u)|^{2}]^{\frac{p-4}{2}}\|\sigma_\Delta(\bar{Y}_{\Delta}(u),\bar{Y}_{\Delta}(u-\tau))\|^2\mbox{d}u\\
&+(p-2)\int_0^s[1+|\tilde{Y}_{\Delta}(u)|^{2}]^{\frac{p-4}{2}}\langle \tilde{Y}_{\Delta}(u), \sigma_\Delta(\bar{Y}_{\Delta}(u),\bar{Y}_{\Delta}(u-\tau))\mbox{d}W(u)\rangle.
\end{split}
\end{equation*}
Thus we have
\begin{equation*}
\begin{split}
&[1+|\vec{Y}_\Delta(s)|^{2}]^{\frac{p-2}{2}}\\
&\le[1+|\vec{Y}_{\Delta}(0)|^{2}]^{\frac{p-2}{2}}+(p-2)\int_0^{\lfloor\frac{s}{\Delta}\rfloor\Delta}[1+|\tilde{Y}_{\Delta}(u)|^{2}]^{\frac{p-4}{2}}\langle \tilde{Y}_{\Delta}(u), b_\Delta(\bar{Y}_{\Delta}(u),\bar{Y}_{\Delta}(u-\tau))\rangle\mbox{d}u\\
&+\frac{1}{2}(p-2)(p-3)\int_0^{\lfloor\frac{s}{\Delta}\rfloor\Delta}[1+|\tilde{Y}_{\Delta}(u)|^{2}]^{\frac{p-4}{2}}\|\sigma_\Delta(\bar{Y}_{\Delta}(u),\bar{Y}_{\Delta}(u-\tau))\|^2\mbox{d}u\\
&+(p-2)\int_0^{\lfloor\frac{s}{\Delta}\rfloor\Delta}[1+|\tilde{Y}_{\Delta}(u)|^{2}]^{\frac{p-4}{2}}\langle \tilde{Y}_{\Delta}(u), \sigma_\Delta(\bar{Y}_{\Delta}(u),\bar{Y}_{\Delta}(u-\tau))\mbox{d}W(u)\rangle.
\end{split}
\end{equation*}
Hence,
\begin{equation*}
\begin{split}
E_{32}(t)\le&(p-2)\mathbb{E}\int_0^t\int_{\lfloor\frac{s}{\Delta}\rfloor\Delta}^s[1+|\tilde{Y}_{\Delta}(u)|^{2}]^{\frac{p-4}{2}}\langle \tilde{Y}_{\Delta}(u), b_\Delta(\bar{Y}_{\Delta}(u),\bar{Y}_{\Delta}(u-\tau))\rangle\mbox{d}u\\
\quad&\times\langle \tilde{Y}_{\Delta}(s)-\vec{Y}_\Delta(s),b_\Delta(\bar{Y}_{\Delta}(s),\bar{Y}_{\Delta}(s-\tau))\rangle\mbox{d}s\\
&+\frac{1}{2}(p-2)(p-3)\mathbb{E}\int_0^t\int_{\lfloor\frac{s}{\Delta}\rfloor\Delta}^s[1+|\tilde{Y}_{\Delta}(u)|^{2}]^{\frac{p-4}{2}}\|\sigma_\Delta(\bar{Y}_{\Delta}(u),\bar{Y}_{\Delta}(u-\tau))\|^2\mbox{d}u\\
\quad&\times\langle \tilde{Y}_{\Delta}(s)-\vec{Y}_\Delta(s),b_\Delta(\bar{Y}_{\Delta}(s),\bar{Y}_{\Delta}(s-\tau))\rangle\mbox{d}s\\
&+(p-2)\mathbb{E}\int_0^t\int_{\lfloor\frac{s}{\Delta}\rfloor\Delta}^s[1+|\tilde{Y}_{\Delta}(u)|^{2}]^{\frac{p-4}{2}}\langle \tilde{Y}_{\Delta}(u), \sigma_\Delta(\bar{Y}_{\Delta}(u),\bar{Y}_{\Delta}(u-\tau))\mbox{d}W(u)\rangle\\
\quad&\times\langle \tilde{Y}_{\Delta}(s)-\vec{Y}_\Delta(s),b_\Delta(\bar{Y}_{\Delta}(s),\bar{Y}_{\Delta}(s-\tau))\rangle\mbox{d}s\\
=:&(p-2)E_{321}+\frac{1}{2}(p-2)(p-3)E_{322}+(p-2)E_{323}.
\end{split}
\end{equation*}
Using (B1), the Young inequality, the H\"{o}lder inequality and the Burkholder-Davis-Gundy (BDG) inequality, we compute
\begin{equation*}
\begin{split}
&E_{321}(t)\le\mathbb{E}\int_0^t\int_{\lfloor\frac{s}{\Delta}\rfloor\Delta}^s[1+|\tilde{Y}_{\Delta}(u)|^{2}]^{\frac{p-4}{2}}\langle \tilde{Y}_{\Delta}(u), b_\Delta(\bar{Y}_{\Delta}(u),\bar{Y}_{\Delta}(u-\tau))\rangle\mbox{d}u\\
&\times\bigg\langle\int_{\lfloor\frac{s}{\Delta}\rfloor\Delta}^sb_\Delta(\bar{Y}_{\Delta}(u),\bar{Y}_{\Delta}(u-\tau))\mbox{d}u,b_\Delta(\bar{Y}_{\Delta}(s),\bar{Y}_{\Delta}(s-\tau))\bigg\rangle\mbox{d}s\\
&+\mathbb{E}\int_0^t\int_{\lfloor\frac{s}{\Delta}\rfloor\Delta}^s[1+|\tilde{Y}_{\Delta}(u)|^{2}]^{\frac{p-4}{2}}\langle \tilde{Y}_{\Delta}(u), b_\Delta(\bar{Y}_{\Delta}(u),\bar{Y}_{\Delta}(u-\tau))\rangle\mbox{d}u\\
&\times\bigg\langle\int_{\lfloor\frac{s}{\Delta}\rfloor\Delta}^s\sigma_\Delta(\bar{Y}_{\Delta}(u),\bar{Y}_{\Delta}(u-\tau))\mbox{d}W(u),b_\Delta(\bar{Y}_{\Delta}(s),\bar{Y}_{\Delta}(s-\tau))\bigg\rangle\mbox{d}s\\
\le&\Delta\mathbb{E}\int_0^t\int_{\lfloor\frac{s}{\Delta}\rfloor\Delta}^s[1+|\tilde{Y}_{\Delta}(u)|^{2}]^{\frac{p-3}{2}}|b_\Delta(\bar{Y}_{\Delta}(s),\bar{Y}_{\Delta}(s-\tau))|^3\mbox{d}u\mbox{d}s\\
&+C\mathbb{E}\int_0^t\bigg[\bigg(\int_{\lfloor\frac{s}{\Delta}\rfloor\Delta}^s[1+|\tilde{Y}_{\Delta}(u)|^{2}]^{\frac{p-3}{2}}|b_\Delta(\bar{Y}_{\Delta}(u),\bar{Y}_{\Delta}(u-\tau))|\mbox{d}u |b_\Delta(\bar{Y}_{\Delta}(s),\bar{Y}_{\Delta}(s-\tau))|\bigg)^{\frac{p}{p-1}}\\
&+\left|\int_{\lfloor\frac{s}{\Delta}\rfloor\Delta}^s\sigma_\Delta(\bar{Y}_{\Delta}(u),\bar{Y}_{\Delta}(u-\tau))\mbox{d}W(u)\right|^{p}\bigg]\mbox{d}s\\
\le&C\Delta^{2-3\alpha}\mathbb{E}\int_0^t(|Y_{\Delta}(s)|^{p}+|Y_{\Delta}(s-\tau)|^{p}+|\bar{Y}_{\Delta}(s)|^{p}+|\bar{Y}_{\Delta}(s-\tau)|^{p})\mbox{d}s\\
&+C\Delta^{2-3\alpha}\Delta^{(1-\alpha)p}\mathbb{E}\int_0^t(|Y_{\Delta}(s)|^{p}+|Y_{\Delta}(s-\tau)|^{p})\mbox{d}s\\
&+C\mathbb{E}\int_0^t\left(\int_{\lfloor\frac{s}{\Delta}\rfloor\Delta}^s[1+|\tilde{Y}_{\Delta}(u)|^{2}]^{\frac{p-3}{2}}|b_\Delta(\bar{Y}_{\Delta}(s),\bar{Y}_{\Delta}(s-\tau))|^2\mbox{d}u\right)^{\frac{p}{p-1}}\mbox{d}s\\
&+C\mathbb{E}\int_0^t\left(\int_{\lfloor\frac{s}{\Delta}\rfloor\Delta}^s\|\sigma_\Delta(\bar{Y}_{\Delta}(u),\bar{Y}_{\Delta}(u-\tau))\|^2\mbox{d}u\right)^{\frac{p}{2}}\mbox{d}s\\
\le&C+C\int_0^t\sup\limits_{0\le u\le s}\mathbb{E}|Y_{\Delta}(u)|^{p}\mbox{d}s
+C\Delta^{(1-2\alpha)\frac{p}{2}}\int_0^t\sup\limits_{0\le u\le s}\mathbb{E}|Y_{\Delta}(u)|^{p}\mbox{d}s\\
\le&C+C\int_0^t\sup\limits_{0\le u\le s}\mathbb{E}|Y_{\Delta}(u)|^{p}\mbox{d}s.
\end{split}
\end{equation*}
Using the same techniques in the way to estimate  $E_{321}(t)$, we get
\begin{equation*}
E_{322}(t)\le C+C\int_0^t\sup\limits_{0\le u\le s}\mathbb{E}|Y_{\Delta}(u)|^{p}\mbox{d}s.
\end{equation*}
Furthermore, by (B1), we have
\begin{equation*}
\begin{split}
E_{323}(t)=&\mathbb{E}\int_0^t\int_{\lfloor\frac{s}{\Delta}\rfloor\Delta}^s[1+|\tilde{Y}_{\Delta}(u)|^{2}]^{\frac{p-4}{2}}\langle \tilde{Y}_{\Delta}(u),\sigma_\Delta(\bar{Y}_{\Delta}(u),\bar{Y}_{\Delta}(u-\tau))\mbox{d}W(u)\rangle \\
&\times\bigg\langle\int_{\lfloor\frac{s}{\Delta}\rfloor\Delta}^sb_\Delta(\bar{Y}_{\Delta}(u),\bar{Y}_{\Delta}(u-\tau))\mbox{d}u,
b_\Delta(\bar{Y}_{\Delta}(s),\bar{Y}_{\Delta}(s-\tau))\bigg\rangle\mbox{d}s\\
&+\mathbb{E}\int_0^t\int_{\lfloor\frac{s}{\Delta}\rfloor\Delta}^s[1+|\tilde{Y}_{\Delta}(u)|^{2}]^{\frac{p-4}{2}}\langle \tilde{Y}_{\Delta}(u),\sigma_\Delta(\bar{Y}_{\Delta}(u),\bar{Y}_{\Delta}(u-\tau))\mbox{d}W(u)\rangle \\
&\times\bigg\langle\int_{\lfloor\frac{s}{\Delta}\rfloor\Delta}^s\sigma_\Delta(\bar{Y}_{\Delta}(u),\bar{Y}_{\Delta}(u-\tau))\mbox{d}W(u),
b_\Delta(\bar{Y}_{\Delta}(s),\bar{Y}_{\Delta}(s-\tau))\bigg\rangle\mbox{d}s\\
=&\mathbb{E}\int_0^t\int_{\lfloor\frac{s}{\Delta}\rfloor\Delta}^s[1+|\tilde{Y}_{\Delta}(u)|^{2}]^{\frac{p-4}{2}}\langle \tilde{Y}_{\Delta}(u),\sigma_\Delta(\bar{Y}_{\Delta}(u),\bar{Y}_{\Delta}(u-\tau))\mbox{d}W(u)\rangle \\
&\times\bigg\langle\int_{\lfloor\frac{s}{\Delta}\rfloor\Delta}^s\sigma_\Delta(\bar{Y}_{\Delta}(u),\bar{Y}_{\Delta}(u-\tau))\mbox{d}W(u),
b_\Delta(\bar{Y}_{\Delta}(s),\bar{Y}_{\Delta}(s-\tau))\bigg\rangle\mbox{d}s\\
\le&\mathbb{E}\int_0^t\int_{\lfloor\frac{s}{\Delta}\rfloor\Delta}^s[1+|\tilde{Y}_{\Delta}(u)|^{2}]^{\frac{p-3}{2}}\|\sigma_\Delta(\bar{Y}_{\Delta}(u),\bar{Y}_{\Delta}(u-\tau))\|^2\mbox{d}u|b_\Delta(\bar{Y}_{\Delta}(s),\bar{Y}_{\Delta}(s-\tau))|\mbox{d}s\\
\le&C\Delta^{1-2\alpha}\mathbb{E}\int_0^t(|Y_{\Delta}(s)|^{p}+|Y_{\Delta}(s-\tau)|^{p}+|\bar{Y}_{\Delta}(s)|^{p}+|\bar{Y}_{\Delta}(s-\tau)|^{p})\mbox{d}s\\
&+C\Delta^{1-2\alpha}\Delta^{(1-\alpha)p}\mathbb{E}\int_0^t(|Y_{\Delta}(s)|^{p}+|Y_{\Delta}(s-\tau)|^{p})\mbox{d}s\\
\le& C+C\int_0^t\sup\limits_{0\le u\le s}\mathbb{E}|Y_{\Delta}(u)|^{p}\mbox{d}s.
\end{split}
\end{equation*}
By sorting these equations, we conclude that
\begin{equation*}\label{e3t}
E_3(t)\le C+C\int_0^t\sup\limits_{0\le u\le s}\mathbb{E}|Y_{\Delta}(u)|^{p}\mbox{d}s.
\end{equation*}
Thus, the estimate of $E_1(t)-E_3(t)$ results in
\begin{equation}\label{tildeyt}
\begin{split}
\sup\limits_{0\le u\le t}\mathbb{E}|\tilde{Y}_{\Delta}(u)|^p\le\sup\limits_{0\le u\le t}\mathbb{E}[1+|\tilde{Y}_{\Delta}(u)|^{2}]^{\frac{p}{2}}\le C+C\int_0^t\sup\limits_{0\le u\le s}\mathbb{E}|Y_{\Delta}(u)|^p\mbox{d}s.
\end{split}
\end{equation}
Since $|x-y|^p\ge 2^{1-p}|x|^p-|y|^p$,  we have
\begin{equation*}
\begin{split}
|\tilde{Y}_{\Delta}(t)|^p\ge &2^{1-p}|Y_{\Delta}(t)-D(Y_{\Delta}(t-\tau))|^p-|\theta b_\Delta(Y_{\Delta}(t),Y_{\Delta}(t-\tau))\Delta|^p\\
\ge&2^{1-p}[2^{1-p}|Y_{\Delta}(t)|^p-|D(Y_{\Delta}(t-\tau))|^p]-|\theta b_\Delta(Y_{\Delta}(t),Y_{\Delta}(t-\tau))\Delta|^p.
\end{split}
\end{equation*}
This, combining with (A2) and  (B1), yields that
\begin{equation*}
\begin{split}
|\tilde{Y}_{\Delta}(t)|^p\ge& (2^{2-2p}-\tilde{C}_\Delta)|Y_{\Delta}(t)|^p-(2^{2-p}\kappa^p+\tilde{C}_\Delta)|Y_{\Delta}(t-\tau)|^p-\tilde{C}_\Delta,
\end{split}
\end{equation*}
where $\tilde{C}_\Delta=\theta^p K_5^p3^{p-1}\Delta$. Consequently,
\begin{equation*}
\begin{split}
\sup\limits_{0\le u\le t}\mathbb{E}|Y_{\Delta}(u)|^p\le (2^{2-2p}-2^{2-p}\kappa^p-2\tilde{C}_\Delta)^{-1}\left[\sup\limits_{0\le u\le t}\mathbb{E}|\tilde{Y}_{\Delta}(u)|^p+\tilde{C}_\Delta+(2^{2-p}\kappa^p+\tilde{C}_\Delta)\mathbb{E}\|\xi\|_\infty^p\right].
\end{split}
\end{equation*}
This, together with \eqref{tildeyt},  implies
\begin{equation*}
\begin{split}
\sup\limits_{0\le u\le t}\mathbb{E}|Y_{\Delta}(u)|^p
&\le C+C\int_0^t\sup\limits_{0\le u\le s}\mathbb{E}|Y_{\Delta}(u)|^p\mbox{d}s.
\end{split}
\end{equation*}
Finally, the desired result is obtained by the Gronwall inequality.
\end{proof}

\begin{lem}\label{ytytk}
{\rm
Let (A1)-(A2), (B1)-(B2) hold. Then, we have  any $p\ge2$
\begin{equation*}
\mathbb{E}\left[\sup\limits_{0\le k\le M-1}\sup\limits_{t_k\le t<t_{k+1}}|Y_{\Delta}(t)-Y_{\Delta}(t_k)|^p\right]\le C\Delta^{\frac{p}{2}},
\end{equation*}
where $C$ is a positive constant independent  of  $\Delta$.
}
\end{lem}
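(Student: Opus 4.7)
The plan is to exploit the piecewise-constant structure of $\bar{Y}_{\Delta}$ and $\bar{Y}_{\Delta+}$ on $[t_k,t_{k+1})$. Starting from the continuous-time representation of $Y_\Delta$ given just above \eqref{barytdef} and subtracting $Y_\Delta(t_k)$, the neutral term $D(\bar{Y}_\Delta(t-\tau))$ cancels (it equals $D(y_{t_{k-m}})$ throughout the interval) and each of the three integrals collapses to a frozen-integrand integral over $[t_k,t]$. This yields the explicit identity
\begin{equation*}
Y_\Delta(t)-Y_\Delta(t_k) = \theta(t-t_k)\,b_\Delta(y_{t_{k+1}},y_{t_{k+1-m}}) + (1-\theta)(t-t_k)\,b_\Delta(y_{t_k},y_{t_{k-m}}) + \sigma_\Delta(y_{t_k},y_{t_{k-m}})\bigl(W(t)-W(t_k)\bigr)
\end{equation*}
valid for $t\in[t_k,t_{k+1})$.

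I would then split the $p$-th power using $|a+b+c|^p\le 3^{p-1}(|a|^p+|b|^p+|c|^p)$ and handle the three pieces separately. For the two drift contributions, which are constant in $t$ on the subinterval, condition (B1) gives $|b_\Delta(x,y)|\le K_5\Delta^{-\alpha}(1+|x|+|y|)$; combined with $t-t_k\le\Delta$ and $\alpha\le 1/2$ this produces $|(t-t_k)\,b_\Delta(x,y)|^p \le C\Delta^{(1-\alpha)p}(1+|x|^p+|y|^p) \le C\Delta^{p/2}(1+|x|^p+|y|^p).$ Taking expectations and invoking Lemma \ref{pmoment}, which controls $\mathbb{E}|Y_\Delta(s)|^p$ uniformly in $s\in[0,T]$ (hence at both $t_k$ and the implicit forward point $t_{k+1}$), yields a bound of order $\Delta^{p/2}$.

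For the stochastic piece, $\sigma_\Delta(y_{t_k},y_{t_{k-m}})$ is $\mathscr{F}_{t_k}$-measurable and constant in $s$ on $[t_k,t_{k+1})$, so the Burkholder--Davis--Gundy inequality applied to the martingale $t\mapsto\sigma_\Delta(y_{t_k},y_{t_{k-m}})(W(t)-W(t_k))$ gives
\begin{equation*}
\mathbb{E}\sup_{t\in[t_k,t_{k+1})}\bigl|\sigma_\Delta(y_{t_k},y_{t_{k-m}})(W(t)-W(t_k))\bigr|^p \le C_p\,\Delta^{p/2}\,\mathbb{E}\|\sigma_\Delta(y_{t_k},y_{t_{k-m}})\|^p.
\end{equation*}
Using $\|\sigma_\Delta(x,y)\|^2\le\|\sigma(x,y)\|^2$ from (B1) together with (A3) to bound $\|\sigma_\Delta(x,y)\|^p$ by $C(1+|x|^p+|y|^p)$, and applying Lemma \ref{pmoment} once more, the right-hand side is at most $C\Delta^{p/2}$. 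Since all three bounds are uniform in $k$, taking the maximum over $k\in\{0,\dots,M-1\}$ gives the claim. The only step that needs a moment of care is the implicit drift piece, which is evaluated at the forward grid point $t_{k+1}$ rather than at $t_k$; this is handled seamlessly because Lemma \ref{pmoment} supplies a uniform-in-$t$ moment bound, so no serious obstacle is anticipated.
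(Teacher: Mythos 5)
Your explicit identity for $Y_\Delta(t)-Y_\Delta(t_k)$ is the crux of the proposal, and it is where the gap lies. That identity holds for the first continuous extension written immediately after \eqref{discrete1}, in which the neutral term appears as $D(\bar{Y}_\Delta(t-\tau))$ and is therefore piecewise constant on each subinterval. But the paper explicitly discards that extension (it is not $\mathscr{F}_t$-adapted) and instead defines $Y_\Delta(t)$ through the split-step representation: $Y_\Delta(t)$ is tied to the adapted process $\tilde{Y}_\Delta(t)$ of \eqref{ycontinuous} by the implicit relation $Y_\Delta(t)-D(Y_\Delta(t-\tau))-\theta b_\Delta(Y_\Delta(t),Y_\Delta(t-\tau))\Delta=\tilde{Y}_\Delta(t)$. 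For this $Y_\Delta$ the neutral term is $D(Y_\Delta(t-\tau))$, which varies with $t$ on $[t_k,t_{k+1})$, so it does not cancel when you subtract $Y_\Delta(t_k)$, and your frozen-integrand identity is false for the process the lemma is actually about. A telltale sign is that your argument never invokes (A1), which the lemma lists as a hypothesis: the regularity of the initial segment is exactly what is needed to start controlling the neutral correction.

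The paper's proof first bounds $\mathbb{E}\sup_{t_k\le t<t_{k+1}}|\tilde{Y}_\Delta(t)-\tilde{Y}_\Delta(t_k)|^p\le C\Delta^{p/2}$ (this part is close in spirit to your drift and diffusion estimates, using (B1), the H\"older inequality and BDG), and then converts this into a bound on $|Y_\Delta(t)-Y_\Delta(t_k)|^p$ via the lower bound \eqref{tildeyt-ytk}, which leaves over a term $\kappa^p|Y_\Delta(t-\tau)-Y_\Delta(t_k-\tau)|^p$ coming from the neutral part. Since $t-\tau$ lies in the earlier block $[t_{k-m},t_{k+1-m})$, this term is handled by induction on $k$, with the base case on $[-\tau,0]$ supplied by (A1). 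That induction over delay blocks is the step your proposal is missing; without it you have only proved the estimate for a different interpolant than the one used in Lemma \ref{pmoment} and in the convergence theorem. Two smaller points: (A3) is not among the lemma's hypotheses, but (B2) already gives the linear-growth bound on $\|\sigma_\Delta\|^2$ that you need; and, like the paper, your argument only yields the estimate for each fixed $k$ rather than with $\sup_k$ inside the expectation, though the per-$k$ version is all that is used downstream.
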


\begin{proof}
From the definition of numerical scheme \eqref{ycontinuous}, one sees that for $t\in [t_k, t_{k+1})$,
\begin{equation*}
\tilde{Y}_{\Delta}(t)-\tilde{Y}_{\Delta}(t_k)=\int_{t_k}^t b_\Delta(\bar{Y}_{\Delta}(s), \bar{Y}_{\Delta}(s-\tau))\mbox{d}s+\int_{t_k}^t \sigma_\Delta(\bar{Y}_{\Delta}(s), \bar{Y}_{\Delta}(s-\tau))\mbox{d}W(s).
\end{equation*}
By the elementary inequality $|a+b|^p\le 2^{p-1}(|a|^p+|b|^p), p\ge 1$, we compute
\begin{equation*}
\begin{split}
\mathbb{E}\left[\sup\limits_{t_k\le t<t_{k+1}}|\tilde{Y}_{\Delta}(t)-\tilde{Y}_{\Delta}(t_k)|^p\right]&\le 2^{p-1}\mathbb{E}\left[\sup\limits_{t_k\le t<t_{k+1}}\left|\int_{t_k}^t b_\Delta(\bar{Y}_{\Delta}(s), \bar{Y}_{\Delta}(s-\tau))\mbox{d}s\right|^p\right]\\
&+2^{p-1}\mathbb{E}\left[\sup\limits_{t_k\le t<t_{k+1}}\left|\int_{t_k}^t \sigma_\Delta(\bar{Y}_{\Delta}(s), \bar{Y}_{\Delta}(s-\tau))\mbox{d}W(s)\right|^p\right].
\end{split}
\end{equation*}
With (B1), Lemma \ref{pmoment}, the H\"{o}lder inequality and the BDG inequality, we derive
\begin{equation*}
\begin{split}
&\mathbb{E}\left[\sup\limits_{t_k\le t<t_{k+1}}|\tilde{Y}_{\Delta}(t)-\tilde{Y}_{\Delta}(t_k)|^p\right]
\le 2^{p-1}\Delta^{p-1}\mathbb{E}\int_{t_k}^{t_{k+1}} \left|b_\Delta(\bar{Y}_{\Delta}(s), \bar{Y}_{\Delta}(s-\tau))\right|^p\mbox{d}s\\
&+C\mathbb{E}\left[\int_{t_k}^{t_{k+1}} \left\|\sigma_\Delta(\bar{Y}_{\Delta}(s), \bar{Y}_{\Delta}(s-\tau))\right\|^2\mbox{d}s\right]^{\frac{p}{2}}\\
\le&C\Delta^{(1-\alpha)p}+C\mathbb{E}\left[\int_{t_k}^{t_{k+1}}(1+|\bar{Y}_{\Delta}(s)|^2+|\bar{Y}_{\Delta}(s-\tau)|^2)\mbox{d}s\right]^{\frac{p}{2}}\\
\le &C\Delta^{(1-\alpha)p}+C\Delta^{\frac{p}{2}}\le C\Delta^{\frac{p}{2}}.
\end{split}
\end{equation*}
Denoting by $\tilde{D}(t,t_k):=D(Y_{\Delta}(t-\tau))-D(Y_{\Delta}(t_k-\tau))$, and $\tilde{b}_\Delta(t,t_k):=b_\Delta(Y_{\Delta}(t),Y_{\Delta}(t-\tau))-b_\Delta(Y_{\Delta}(t_k),Y_{\Delta}(t_k-\tau))$, with (A2), we arrive at,
\begin{equation}\label{tildeyt-ytk}
\begin{split}
&|\tilde{Y}_{\Delta}(t)-\tilde{Y}_{\Delta}(t_k)|^p\ge 2^{1-p}|Y_{\Delta}(t)-Y_{\Delta}(t_k)-\tilde{D}(t,t_k)|^p-\theta^p\Delta^p|\tilde{b}_\Delta(t,t_k)|^p\\
\ge&2^{2-2p}|Y_{\Delta}(t)-Y_{\Delta}(t_k)|^p-2^{1-p}\kappa^p|Y_{\Delta}(t-\tau)-Y_{\Delta}(t_k-\tau)|^p-\theta^p\Delta^p|\tilde{b}_\Delta(t,t_k)|^p.
\end{split}
\end{equation}
Obviously, for $0\le t<t_1=\Delta$, we have $t-\tau<t_1-\tau<0$, then we see from \eqref{tildeyt-ytk} and Lemma \ref{pmoment} that
\begin{equation*}
\begin{split}
\mathbb{E}\left[\sup\limits_{0\le t<t_1}|Y_{\Delta}(t)-Y_{\Delta}(t_0)|^p\right]\le& C\mathbb{E}\left[\sup\limits_{0\le t<t_1}|\tilde{Y}_{\Delta}(t)-\tilde{Y}_\Delta(t_0)|^p\right]+C\Delta^{(1-\alpha)p}
\le C\Delta^{\frac{p}{2}}.
\end{split}
\end{equation*}
For $t_1\le t<t_2$, \eqref{tildeyt-ytk} and Lemma \ref{pmoment} lead to
\begin{equation*}
\begin{split}
\mathbb{E}\left[\sup\limits_{t_1\le t<t_2}|Y_{\Delta}(t)-Y_{\Delta}(t_1)|^p\right]\le& C\mathbb{E}\left[\sup\limits_{t_1\le t<t_2}|\tilde{Y}_{\Delta}(t)-\tilde{Y}_{\Delta}(t_1)|^p\right]\\
&+\mathbb{E}\left[\sup\limits_{0\le t<(t_{2-m})\vee 0}|Y_{\Delta}(t)-Y_{\Delta}(t_{1-m})|^p\right]+C\Delta^{(1-\alpha)p}\\
\le&C\Delta^{\frac{p}{2}}.
\end{split}
\end{equation*}
Consequently,  the  induction method yields,
\begin{equation*}
\begin{split}
\mathbb{E}\left[\sup\limits_{t_k\le t<t_{k+1}}|Y_{\Delta}(t)-Y_{\Delta}(t_k)|^p\right]\le& C\mathbb{E}\left[\sup\limits_{t_k\le t<t_{k+1}}|\tilde{Y}_{\Delta}(t)-\tilde{Y}_{\Delta}(t_k)|^p\right]+C\Delta^{(1-\alpha)p}\\
\le&C\Delta^{\frac{p}{2}}.
\end{split}
\end{equation*}
The proof is therefore complete.
\end{proof}

\subsection{Strong Convergence Rate}

The following theorem reveals that the  continuous form $Y_{\Delta}(t)$  of the tamed theta scheme \eqref{discrete1} converges to  the exact solution $X(t)$.

\begin{thm}
{\rm Let (A1)-(A4) and (B1)-(B4) hold,  then it holds that for any $ p\ge 2$,
\begin{equation*}
\mathbb{E}\left(\sup\limits_{0\le t\le T}|X(t)-Y_{\Delta}(t)|^p\right)\le C\Delta^{\alpha p},
\end{equation*}
 where $\alpha$ is defined in (B1) and  $C$ is a positive constant independent of $\DD$. That is, the strong convergence rate of the tamed theta scheme \eqref{discrete1} is $\alpha$.
}
\end{thm}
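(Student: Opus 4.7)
The plan is to apply an It\^o-based energy argument to the neutral error, exploit the one-sided Lipschitz condition (A4) to absorb the diffusion, and then transfer the estimate to $X-Y_{\Delta}$ via the contraction property (A2). Set $U(t):=X(t)-D(X(t-\tau))-\tilde{Y}_{\Delta}(t)$. Comparing the integral form of \eqref{brownian} with \eqref{ycontinuous} yields
\begin{equation*}
U(t)=U(0)+\int_0^t\big[b(X(s),X(s-\tau))-b_\Delta(\bar{Y}_{\Delta}(s),\bar{Y}_{\Delta}(s-\tau))\big]\mbox{d}s+\int_0^t\big[\sigma(X(s),X(s-\tau))-\sigma_\Delta(\bar{Y}_{\Delta}(s),\bar{Y}_{\Delta}(s-\tau))\big]\mbox{d}W(s),
\end{equation*}
with $U(0)=\theta b_\Delta(\xi(0),\xi(-\tau))\Delta$, so that (B1) gives $\mathbb{E}|U(0)|^p\le C\Delta^{(1-\alpha)p}\le C\Delta^{\alpha p}$ because $\alpha\le 1/2$.

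Next apply the It\^o formula to $(1+|U(t)|^2)^{p/2}$. The key maneuver is to split both nonlinearities as
\begin{equation*}
b(X(s),X(s-\tau))-b_\Delta(\bar{Y}_{\Delta}(s),\bar{Y}_{\Delta}(s-\tau))=\big[b(X(s),X(s-\tau))-b(\bar{Y}_{\Delta}(s),\bar{Y}_{\Delta}(s-\tau))\big]+\big[b(\bar{Y}_{\Delta}(s),\bar{Y}_{\Delta}(s-\tau))-b_\Delta(\bar{Y}_{\Delta}(s),\bar{Y}_{\Delta}(s-\tau))\big],
\end{equation*}
analogously for $\sigma$, and to replace the left argument of the inner product by the combination $V(s):=X(s)-D(X(s-\tau))-\bar{Y}_{\Delta}(s)+D(\bar{Y}_{\Delta}(s-\tau))$ required by (A4), paying a residual $R_\Delta(s):=U(s)-V(s)$. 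The leading piece then combines the drift and diffusion contributions exactly in the one-sided Lipschitz form of (A4), producing an integrand bounded by $\frac{p}{2}(1+|U(s)|^2)^{(p-2)/2}K_3(|X(s)-\bar{Y}_{\Delta}(s)|^2+|X(s-\tau)-\bar{Y}_{\Delta}(s-\tau)|^2)$.

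The residual contributions are treated as follows. By (B4) and Remark \ref{remark2}, the taming errors $b-b_\Delta$ and $\sigma-\sigma_\Delta$ carry a factor $\Delta^{\alpha}$ multiplied by polynomials whose $L^p$-moments are uniformly bounded by Lemma \ref{pmoment}; the discrepancy $R_\Delta$ consists of a $\theta b_\Delta\Delta$ term plus $\tilde{Y}_{\Delta}(s)-\vec{Y}_{\Delta}(s)$, which is estimated using (B1) and Lemma \ref{ytytk} exactly along the lines already carried out for $E_3(t)$ in the proof of Lemma \ref{pmoment}. Young's inequality absorbs the resulting cross terms into a multiple of $|U|^p$ plus a free term of size $\Delta^{\alpha p}$. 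Writing $|X(s)-\bar{Y}_{\Delta}(s)|\le|X(s)-Y_{\Delta}(s)|+|Y_{\Delta}(s)-\bar{Y}_{\Delta}(s)|$ and applying Lemma \ref{ytytk} to the second summand, followed by the BDG inequality on the It\^o martingale in the Dynkin formula, leads to
\begin{equation*}
\mathbb{E}\sup_{0\le u\le t}|U(u)|^p\le C\Delta^{\alpha p}+C\int_0^t\mathbb{E}\sup_{0\le u\le s}|X(u)-Y_{\Delta}(u)|^p\mbox{d}s+C\int_0^t\mathbb{E}\sup_{0\le u\le s}|X(u-\tau)-Y_{\Delta}(u-\tau)|^p\mbox{d}s.
\end{equation*}

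Finally, the inequality $|a-b|^p\ge 2^{1-p}|a|^p-|b|^p$ applied twice to $U(t)$, together with (A2) (where $\kappa<1/2$ is essential) and (B1), recovers $|X(t)-Y_{\Delta}(t)|^p$ from $|U(t)|^p$ up to a multiple of $|X(t-\tau)-Y_{\Delta}(t-\tau)|^p$ and an additive $\Delta^{\alpha p}$-term, exactly as at the end of the proof of Lemma \ref{pmoment}. Since $X\equiv Y_{\Delta}$ on $[-\tau,0]$, the delayed error vanishes on $[0,\tau]$ and Gronwall's inequality yields $\mathbb{E}\sup_{0\le u\le\tau}|X(u)-Y_{\Delta}(u)|^p\le C\Delta^{\alpha p}$; an induction over successive intervals of length $\tau$ extends the bound to $[0,T]$. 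The hard part is the alignment between the natural It\^o argument of the inner product, namely $U(s)$, and the combination required by (A4), namely $V(s)$: establishing that the discrepancy $R_\Delta$ together with the taming errors can be absorbed at the rate $\Delta^{\alpha p}$ is precisely where (B1), (B4), and Lemma \ref{ytytk} interact, and the balance of these three ingredients is what pins down $\alpha$ as the attainable rate.
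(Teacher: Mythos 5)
Your proposal is correct and follows essentially the same route as the paper: both work with the neutral error quantity $\tilde{Y}_{\Delta}(t)-X(t)+D(X(t-\tau))$ (your $-U(t)$ is the paper's $I(t)$), apply the It\^o formula, split the drift and diffusion differences into a taming error controlled by (B4), a discretization error controlled by Lemma \ref{ytytk}, and a true error absorbed by (A4), handle the misalignment with the left argument of (A4) via (B1) and the moment bounds of Lemma \ref{pmoment}, use BDG on the martingale part, and recover $|X-Y_{\Delta}|$ from the neutral quantity through (A2) before applying Gronwall. The only cosmetic differences are that you apply (A4) directly between $X$ and $\bar{Y}_{\Delta}$ (a two-way split) where the paper inserts $Y_{\Delta}$ as an intermediate point, and that you close the delayed term by induction over intervals of length $\tau$ where the paper absorbs it directly using $2^{2-2p}>2^{1-p}\kappa^p$; both variants yield the same rate $\Delta^{\alpha p}$.
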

\begin{proof}
Denote $I(t)=Y_{\Delta}(t)-D(Y_{\Delta}(t-\tau))-\theta b_\Delta(Y_{\Delta}(t),Y_{\Delta}(t-\tau))\Delta-X(t)+D(X(t-\tau))$, then
\begin{equation*}
\begin{split}
I(t)=&I(0)+\int_0^t[b_\Delta(\bar{Y}_{\Delta}(s),\bar{Y}_{\Delta}(s-\tau))-b(X(s),X(s-\tau))]\mbox{d}s\\
&+\int_0^t[\sigma_\Delta(\bar{Y}_{\Delta}(s),\bar{Y}_{\Delta}(s-\tau))-\sigma(X(s),X(s-\tau))]\mbox{d}W(s),
\end{split}
\end{equation*}
where $I(0)=-\theta b_\Delta(\xi(0), \xi(-\tau))\Delta$. An  application of the It\^{o} formula yields,
\begin{equation*}
\begin{split}
|I(t)|^p\le&|I(0)|^p+p\int_0^t |I(s)|^{p-2}\langle I(s),b_\Delta(\bar{Y}_{\Delta}(s),\bar{Y}_{\Delta}(s-\tau))-b(X(s),X(s-\tau))\rangle\mbox{d}s\\
&+\frac{1}{2}p(p-1)\int_0^t |I(s)|^{p-2}\|\sigma_\Delta(\bar{Y}_{\Delta}(s),\bar{Y}_{\Delta}(s-\tau))-\sigma(X(s),X(s-\tau))\|^2\mbox{d}s\\
&+p\int_0^t |I(s)|^{p-2}\langle I(s),(\sigma_\Delta(\bar{Y}_{\Delta}(s),\bar{Y}_{\Delta}(s-\tau))-\sigma(X(s),X(s-\tau)))\mbox{d}W(s)\rangle\\
\le&|I(0)|^p+\sum\limits_{i=1}^7H_i(t),
\end{split}
\end{equation*}
where
\begin{equation*}
\begin{split}
&H_1(t):=p\int_0^t |I(s)|^{p-2}\langle I(s),b_\Delta(\bar{Y}_{\Delta}(s),\bar{Y}_{\Delta}(s-\tau))-b(\bar{Y}_{\Delta}(s),\bar{Y}_{\Delta}(s-\tau))\rangle\mbox{d}s,\\
&H_2(t) :=p\int_0^t |I(s)|^{p-2}\langle I(s),b(\bar{Y}_{\Delta}(s),\bar{Y}_{\Delta}(s-\tau))-b(Y_{\Delta}(s),Y_{\Delta}(s-\tau))\rangle\mbox{d}s,\\
&H_3(t) :=p\int_0^t |I(s)|^{p-2}\langle I(s),b(Y_{\Delta}(s),Y_{\Delta}(s-\tau))-b(X(s),X(s-\tau))\rangle\mbox{d}s,\\
&H_4(t) :=\frac{3}{2}p(p-1)\int_0^t |I(s)|^{p-2}\|\sigma_\Delta(\bar{Y}_{\Delta}(s),\bar{Y}_{\Delta}(s-\tau))-\sigma(\bar{Y}_{\Delta}(s),\bar{Y}_{\Delta}(s-\tau))\|^2\mbox{d}s,\\
&H_5(t) :=\frac{3}{2}p(p-1)\int_0^t |I(s)|^{p-2}\|\sigma(\bar{Y}_{\Delta}(s),\bar{Y}_{\Delta}(s-\tau))-\sigma(Y_{\Delta}(s),Y_{\Delta}(s-\tau))\|^2\mbox{d}s,\\
&H_6(t) :=\frac{3}{2}p(p-1)\int_0^t |I(s)|^{p-2}\|\sigma(Y_{\Delta}(s),Y_{\Delta}(s-\tau))-\sigma(X(s),X(s-\tau))\|^2\mbox{d}s,\\
&H_7(t) :=p\int_0^t |I(s)|^{p-2}\langle I(s),(\sigma_\Delta(\bar{Y}_{\Delta}(s),\bar{Y}_{\Delta}(s-\tau))-\sigma(X(s),X(s-\tau)))\mbox{d}W(s)\rangle.
\end{split}
\end{equation*}
By (A2), (B1), (B4), Lemma \ref{pmoment}, and the H\"{o}lder inequality,
\begin{equation*}
\begin{split}
&\mathbb{E}\left(\sup\limits_{0\le u\le t}H_1(u)\right)\\
&\le C\mathbb{E}\int_0^t|I(s)|^{p}\mbox{d}s+C\mathbb{E}\int_0^t|b_\Delta(\bar{Y}_{\Delta}(s),\bar{Y}_{\Delta}(s-\tau))-b(\bar{Y}_{\Delta}(s),\bar{Y}_{\Delta}(s-\tau))|^p\mbox{d}s\\
&\le C\mathbb{E}\int_0^t[|Y_{\Delta}(s)-X(s)|^p+|Y_{\Delta}(s-\tau)-X(s-\tau)|^p+\theta^p\Delta^p|b_\Delta(Y_{\Delta}(s),Y_{\Delta}(s-\tau))|^p]\mbox{d}s\\
&+C\Delta^{\alpha p}\mathbb{E}\int_0^t(1+|\bar{Y}_{\Delta}(s)|^{2(l+1)p}+|\bar{Y}_{\Delta}(s-\tau)|^{2(l+1)p})\mbox{d}s\\
&\le C\int_0^t\mathbb{E}\left(\sup\limits_{0\le u\le s}|Y_{\Delta}(u)-X(u)|^p\right)\mbox{d}s+C\Delta^{(1-\alpha)p}+C\Delta^{\alpha p}.
\end{split}
\end{equation*}
By (A2), (A4), (B1), Lemmas \ref{pmoment}-\ref{ytytk}, and the H\"{o}lder inequality,
\begin{equation*}
\begin{split}
&\mathbb{E}\left(\sup\limits_{0\le u\le t}H_2(u)\right)\\
&\le C\mathbb{E}\int_0^t|I(s)|^{p}\mbox{d}s+C\mathbb{E}\int_0^t|b(\bar{Y}_{\Delta}(s),\bar{Y}_{\Delta}(s-\tau))-b(Y_{\Delta}(s),Y_{\Delta}(s-\tau))|^p\mbox{d}s\\
\le &C\mathbb{E}\int_0^t[|Y_{\Delta}(s)-X(s)|^p+|Y_{\Delta}(s-\tau)-X(s-\tau)|^p+\theta^p\Delta^p|b_\Delta(Y_{\Delta}(s),Y_{\Delta}(s-\tau))|^p]\mbox{d}s\\
&+C\mathbb{E}\int_0^t(1+|\bar{Y}_{\Delta}(s)|^{l}+|\bar{Y}_{\Delta}(s-\tau)|^{l}+|Y_{\Delta}(s)|^{l}+|Y_{\Delta}(s-\tau)|^{l})^p\\
&\times(|\bar{Y}_{\Delta}(s)-Y_{\Delta}(s)|+|\bar{Y}_{\Delta}(s-\tau)-Y_{\Delta}(s-\tau)|)^p\mbox{d}s\\
\le&C\mathbb{E}\int_0^t (|Y_{\Delta}(s)-X(s)|^p+|Y_{\Delta}(s-\tau)-X(s-\tau)|^p)\mbox{d}s+C\Delta^{(1-\alpha)p}\\
&+C\int_0^t[\mathbb{E}(1+|\bar{Y}_{\Delta}(s)|^{l}+|\bar{Y}_{\Delta}(s-\tau)|^{l}+|Y_{\Delta}(s)|^{l}+|Y_{\Delta}(s-\tau)|^{l})^{2p}]^{\frac{1}{2}}\\
&\times[\mathbb{E}(|\bar{Y}_{\Delta}(s)-Y_{\Delta}(s)|+|\bar{Y}_{\Delta}(s-\tau)-Y_{\Delta}(s-\tau)|)^{2p}]^{\frac{1}{2}}\mbox{d}s\\
\le&C\int_0^t\mathbb{E}\left(\sup\limits_{0\le u\le s}|Y_{\Delta}(u)-X(u)|^p\right)\mbox{d}s+C\Delta^{(1-\alpha)p}+C\Delta^{\frac{p}{2}}.
\end{split}
\end{equation*}
Due to (A2), (A4), (B1), Lemma \ref{pmoment}, and the H\"{o}lder inequality,
\begin{equation*}
\begin{split}
&\mathbb{E}\left(\sup\limits_{0\le u\le t}H_3(u)\right)+\mathbb{E}\left(\sup\limits_{0\le u\le t}H_6(u)\right)\\
\le&C\mathbb{E}\int_0^t |I(s)|^{p-2}[|Y_{\Delta}(s)-X(s)|^{2}+|Y_{\Delta}(s-\tau)-X(s-\tau)|^{2}]\mbox{d}s\\
&+C\mathbb{E}\int_0^t |I(s)|^{p-2}|\theta b_\Delta(Y_{\Delta}(s),Y_{\Delta}(s-\tau))\Delta||b(Y_{\Delta}(s),Y_{\Delta}(s-\tau))-b(X(s),X(s-\tau))|\mbox{d}s\\
\le&C\mathbb{E}\int_0^t[|Y_{\Delta}(s)-X(s)|^{p}+\theta^p\Delta^p|b_\Delta(Y_{\Delta}(s),Y_{\Delta}(s-\tau))|^p]\mbox{d}s\\
&+C\Delta^{1-\alpha}\mathbb{E}\int_0^t|I(s)|^{p-2}(1+|Y_{\Delta}(s)|+|Y_{\Delta}(s-\tau)|)\times\\
&\quad \quad\quad\quad\quad\quad\quad(1+|Y_{\Delta}(s)|^{l}+|Y_{\Delta}(s-\tau)|^{l}+|X(s)|^{l}+|X(s-\tau)|^{l})\times\\
&\quad \quad\quad\quad\quad\quad\quad(|Y_{\Delta}(s)-X(s)|+|Y_{\Delta}(s-\tau)-X(s-\tau)|)\mbox{d}s\\
\le&C\int_0^t\mathbb{E}\left(\sup\limits_{0\le u\le s}|Y_{\Delta}(u)-X(u)|^p\right)\mbox{d}s+C\Delta^{(1-\alpha)p}.
\end{split}
\end{equation*}
In the same way as the estimate of $H_1(t)$ and $H_2(t)$, we  arrive at
\begin{equation*}
\begin{split}
\mathbb{E}\left(\sup\limits_{0\le u\le t}H_4(u)\right)\le&C\int_0^t\mathbb{E}\left(\sup\limits_{0\le u\le s}|Y_{\Delta}(u)-X(u)|^p\right)\mbox{d}s+C\Delta^{(1-\alpha)p}+C\Delta^{\alpha p},
\end{split}
\end{equation*}
\begin{equation*}
\begin{split}
\mathbb{E}\left(\sup\limits_{0\le u\le t}H_5(u)\right)\le C\int_0^t\mathbb{E}\left(\sup\limits_{0\le u\le s}|Y_{\Delta}(u)-X(u)|^p\right)\mbox{d}s+C\Delta^{(1-\alpha)p}+C\Delta^{\frac{p}{2}}.
\end{split}
\end{equation*}
Furthermore, by (B4), Remark \ref{remark2}, Lemmas \ref{pmoment}-\ref{ytytk}, the BDG inequality and the H\"{o}lder inequality, we compute
\begin{equation*}
\begin{split}
& \mathbb{E}\left(\sup\limits_{0\le u\le t}|H_7(u)|\right)\le C\mathbb{E}\left(\int_0^t |I(s)|^{2p-2}\|\sigma_\Delta(\bar{Y}_{\Delta}(s),\bar{Y}_{\Delta}(s-\tau))-\sigma(X(s),X(s-\tau))\|^{2}\mbox{d}s\right)^{\frac{1}{2}}\\
\le&\frac{1}{4}\mathbb{E}\left(\sup\limits_{0\le u\le t }|I(u)|^p\right)+C\int_0^t \mathbb{E}\left(\sup\limits_{0\le u\le s}|Y_{\Delta}(u)-X(u)|^p\right)\mbox{d}s+C\Delta^{(1-\alpha)p}+C\Delta^{\alpha p}+C\Delta^{\frac{p}{2}}.
\end{split}
\end{equation*}
By sorting $H_1(t)-H_7(t)$ together, we derive
\begin{equation*}
\begin{split}
\mathbb{E}\left(\sup\limits_{0\le u\le t}|I(u)|^p\right)\le C\int_0^t\mathbb{E}\left(\sup\limits_{0\le u\le s}|Y_{\Delta}(u)-X(u)|^p\right)\mbox{d}s+C\Delta^{\alpha p}.
\end{split}
\end{equation*}
By the definition of $I(t)$, we have
\begin{equation*}
\begin{split}
|I(t)|^p\ge &2^{1-p}|Y_{\Delta}(t)-X(t)-D(Y_{\Delta}(t-\tau))+D(X(t-\tau))|^p-|\theta b_\Delta(Y_{\Delta}(t),Y_{\Delta}(t-\tau))\Delta|^p\\
\ge&2^{1-p}[2^{1-p}|Y_{\Delta}(t)-X(t)|^p-|D(Y_{\Delta}(t-\tau))-D(X(t-\tau))|^p]-|\theta b_\Delta(Y_{\Delta}(t),Y_{\Delta}(t-\tau))\Delta|^p,
\end{split}
\end{equation*}
this, together with (A2), leads to
\begin{equation*}
\begin{split}
|I(t)|^p\ge& 2^{2-2p}|Y_{\Delta}(t)-X(t)|^p-2^{1-p}\kappa^p|Y_{\Delta}(t-\tau)-X(t-\tau)|^p-|\theta b_\Delta(Y_{\Delta}(t),Y_{\Delta}(t-\tau))\Delta|^p.
\end{split}
\end{equation*}
Taking (B1) and Lemma \ref{pmoment} into consideration yields
\begin{equation*}
\begin{split}
\mathbb{E}\left(\sup\limits_{0\le u\le t}|Y_{\Delta}(u)-X(u)|^p\right)
\le&C\mathbb{E}\left(\sup\limits_{0\le u\le t}|I(u)|^p\right)+C\Delta^{(1-\alpha)p}\\
\le&C\Delta^{\alpha p}+C\int_0^t\mathbb{E}\left(\sup\limits_{0\le u\le s}|Y_{\Delta}(u)-X(u)|^p\right)\mbox{d}s.
\end{split}
\end{equation*}
The desired result follows from the Gronwall inequality.
\end{proof}

\begin{rem}\label{weaker}
{\rm If we replace (A3) and (B2) by the following weaker forms:
\begin{enumerate}
\item[{\bf (A3')}]There exists a positive constant $K_2$ such that for some $p\ge2$
\begin{equation*}
2\< x-D(y), b(x, y)\>+(p-1)\|\sigma(x, y)\|^2\le K_2(1+|x|^2+|y|^2),
\end{equation*}
\item[{\bf (B2')}]There exists a positive constant $\tilde{K}_2$ such that for some $p\ge2$
\begin{equation*}
2\< x-D(y), b_\Delta(x,y)\>+(p-1)\|\sigma_\Delta(x,y)\|^2\le \tilde{K}_2(1+|x|^2+|y|^2),
\end{equation*}
\end{enumerate}
we can also show that under assumptions (A1)-(A2), (A3'), (A4), (B1)-(B2), (B3'), (B4), the tamed theta scheme $Y_\Delta(t)$ converges strongly to the exact solution $X(t)$ with order $\alpha$.
}
\end{rem}

\section{Local One-sided Lipschitz Drift}
In this section, instead of the global one-sided Lipschitz condition (A4), we impose the following local one-sided Lipschitz condition:
\begin{enumerate}
\item[{\bf (A5)}]For every $R>0$, there exists a positive constant $L_R$ such that
\begin{equation*}
\begin{split}
&\quad \< x-D(y)-\bar{x}+D(\bar{y}), b(x, y)-b(\bar{x}, \bar{y})\>\vee\|\sigma(x, y)-\sigma(\bar{x}, \bar{y})\|^2\\
&\le L_R(|x-\bar{x}|^2+|y-\bar{y}|^2)
\end{split}
\end{equation*}
for all $|x|\vee|y|\vee|\bar{x}|\vee|\bar{y}|\le R$.
\end{enumerate}

\begin{rem}\label{bbound}
Due to the continuity of $b(x,y)$, for every $R>0$, there exists a positive constant $\bar{L}_R$ such that
\begin{equation*}
\begin{split}
\sup\limits_{|x|\vee|y|\le R}|b(x,y)|\le \bar{L}_R.
\end{split}
\end{equation*}
\end{rem}

\begin{rem}
{\rm There are many examples such that the assumptions can be  verified. For example, if we set
\begin{equation*}
\begin{split}
D(y)=\frac{1}{4}\cos y,\quad b(x,y)=x-x^3+\cos y,\quad \sigma(x,y)=y\sin x+x\sin y,
\end{split}
\end{equation*}
then assumptions (A2)-(A3) and (A5) hold.
}
\end{rem}

Consider the following tamed theta scheme imposed in Section 2:
\begin{equation*}
\begin{split}
y_{t_{k+1}}-D(y_{t_{k+1-m}})&=y_{t_k}-D(y_{t_{k-m}})+\theta b_\Delta(y_{t_{k+1}}, y_{t_{k+1-m}})\Delta\\
&\quad+(1-\theta) b_\Delta(y_{t_{k}}, y_{t_{k-m}})\Delta+\sigma_\Delta(y_{t_{k}}, y_{t_{k-m}})\Delta W_{t_k}.
\end{split}
\end{equation*}
Generally speaking, for a given $y_{t_k}$,  to guarantee a unique solution $y_{t_{k+1}}$ is to assume that there exists a positive constant $L$ such that
\begin{equation*}
\langle x-D(y)-\bar{x}+D(\bar{y}),b_\Delta(x,y)-b_\Delta(\bar{x},\bar{y})\rangle\le L(|x-\bar{x}|^2+|y-\bar{y}|^2)
\end{equation*}
as in Section 2. Moreover, as shown in Mao and Szpruch \cite{ms13}, this condition is somehow hard to relax. While in our assumption (A5), the drift coefficient $b$ is local one-sided Lipschitz, thus in this case, the tamed drift $b_\Delta$ is hardly to be global one-sided Lipschitz. That is, we do not know if the tamed theta scheme \eqref{discrete1} is well defined under assumptions (A2)-(A3) and (A5). In the following, we will provide an improved tamed theta scheme to ensure the well-posedness of implicit equations.

\subsection{The Improved Tamed Theta Scheme}

For any $R>0$, define a  smooth, non-negative  function such that
\begin{equation*}
%\begin{split}
\zeta_R(x,y)=\left\{
\begin{array}{lll}
&1,~~&{\rm for}~|x|, |y|\le R,\\
&0,~~&{\rm for}~|x| ~{\rm or}~|y|>R+1,
\end{array}
\right.
%\end{split}
\end{equation*}
and $\zeta_{R}(x,y)\le 1$ for all $x,y\in\mathbb{R}^n$. It is obvious that $\zeta_{R}(x,y)$ is Lipschitz with some constant $C_{\zeta}$. Now we introduce the improved tamed theta scheme for \eqref{brownian}. For $k=-m, \cdots, 0$, set $y_{t_k}=\xi(k\Delta)$; For $k=0, 1, \cdots, M-1$, we form
\begin{equation}\label{jingxi}
\begin{split}
y_{t_{k+1}}-D(y_{t_{k+1-m}})&=y_{t_k}-D(y_{t_{k-m}})+\theta \bar{b}_\Delta(y_{t_{k+1}}, y_{t_{k+1-m}})\zeta_R(y_{t_{k+1}}, y_{t_{k+1-m}})\Delta\\
&\quad+(1-\theta) \bar{b}_\Delta(y_{t_{k}}, y_{t_{k-m}})\zeta_R(y_{t_{k}}, y_{t_{k-m}})\Delta+\sigma_\Delta(y_{t_{k}}, y_{t_{k-m}})\Delta W_{t_k},
\end{split}
\end{equation}
where $t_k=k\Delta$, and $\Delta W_{t_k}=W(t_{k+1})-W(t_k)$. Here $\bar{b}_\Delta:\mathbb{R}^n\times\mathbb{R}^n\rightarrow\mathbb{R}^n$ is a continuous function, and  $\sigma_\Delta:\mathbb{R}^n\times\mathbb{R}^n\rightarrow\mathbb{R}^n\otimes\mathbb{R}^d$ is a measurable function. Besides, $\theta\in [0,1]$ is an additional parameter that allows us to control the implicitness of the numerical scheme. Denote
\begin{equation*}
\begin{split}
\tilde{b}_\Delta(x, y)=\bar{b}_\Delta(x, y)\zeta_R(x,y),
\end{split}
\end{equation*}
then, \eqref{jingxi} can be rewritten as
\begin{equation}\label{2discrete1}
\begin{split}
y_{t_{k+1}}-D(y_{t_{k+1-m}})&=y_{t_k}-D(y_{t_{k-m}})+\theta\tilde{ b}_\Delta(y_{t_{k+1}}, y_{t_{k+1-m}})\Delta\\
&\quad+(1-\theta) \tilde{b}_\Delta(y_{t_{k}}, y_{t_{k-m}})\Delta+\sigma_\Delta(y_{t_{k}}, y_{t_{k-m}})\Delta W_{t_k},
\end{split}
\end{equation}
which is exactly the form of \eqref{discrete1}. According to  \eqref{2discrete1} we define  $\bar{Y}_{\Delta}(t)$, $Y_{\Delta}(t)$, $\tilde{Y}_{\Delta}(t)$, $Z_\Delta(t)$ by using the same notation as  in Section 2. Instead of constraints on $\tilde{b}_\Delta(x,y)$, we impose some assumptions on $\bar{b}_\Delta(x,y)$ and $ \sigma_\Delta(x,y)$. Assume that there exists   an $\alpha\in(0, 1/2]$ such that for any $x, y, \bar{x}, \bar{y} \in\R^n$, the following conditions hold:
\begin{enumerate}
\item[{\bf (C1)}]There exists a positive constant $K_5\ge 1$ such that
\begin{equation*}
|\bar{b}_\Delta(x,y)|\le \min(K_5\Delta^{-\alpha}(1+|x|+|y|), |b(x,y)|),
\end{equation*}
and
\begin{equation*}
\|\sigma_\Delta(x,y)\|^2\le \min(K_5\Delta^{-\alpha}(1+|x|^2+|y|^2), \|\sigma(x,y)\|^2).
\end{equation*}
\item[{\bf (C2)}]There exists a positive constant $\tilde{K}_2$ such that
\begin{equation*}
\< x-D(y), \bar{b}_\Delta(x,y)\>\vee\|\sigma_\Delta(x,y)\|^2\le \tilde{K}_2(1+|x|^2+|y|^2).
\end{equation*}
\item[{\bf (C3)}] For any $R>0$, there exists a positive constant $M_R$ such that
\begin{equation*}
\begin{split}
&\quad \< x-D(y)-\bar{x}+D(\bar{y}), \bar{b}_\Delta(x, y)-\bar{b}_\Delta(\bar{x}, \bar{y})\>\le M_R(|x-\bar{x}|^2+|y-\bar{y}|^2).
\end{split}
\end{equation*}
for  all $|x|\vee|y|\vee|\bar{x}|\vee|\bar{y}|\le R$.
\item[{\bf (C4)}] For any $R>0$, there exists a positive constant $N_R$ such that
\begin{equation*}
\sup_{|x|\vee|y|\le R}\left[|b(x,y)-\bar{b}_\Delta(x,y)|^{p}\vee\|\sigma(x,y)-\sigma_\Delta(x,y)\|^{p}\right]\le N_R\Delta^{\alpha p}\rightarrow 0\quad\mbox{as}\quad \Delta\rightarrow 0.
\end{equation*}
\end{enumerate}

\begin{lem}\label{diff}
{\rm Let (A2), (C1)-(C4) hold, then $\tilde{b}_\Delta$ satisfies (C1), (C2), (C4) and the following (C3'):
\begin{enumerate}
\item[{\bf (C3')}]There exists an $\bar{M}_{R_0}$ such that for all $x, y, \bar{x}, \bar{y} \in\R^n$
\begin{equation*}
\begin{split}
&\quad \< x-D(y)-\bar{x}+D(\bar{y}), \tilde{b}_\Delta(x, y)-\tilde{b}_\Delta(\bar{x}, \bar{y})\>\le \bar{M}_{R_0}(|x-\bar{x}|^2+|y-\bar{y}|^2),
\end{split}
\end{equation*}
where $\bar{M}_{R_0}=M_{R_0}+2C_\zeta \bar{L}_{R_0}$.
\end{enumerate}
}
\end{lem}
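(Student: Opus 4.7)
The plan is to verify the four properties in turn, with the main work going into (C3$'$). Property (C1) is immediate: since $\zeta_R\in[0,1]$, one has $|\tilde b_\Delta|=\zeta_R|\bar b_\Delta|\le|\bar b_\Delta|$, so both upper bounds in (C1) on $\bar b_\Delta$ are inherited, and $\sigma_\Delta$ is untouched. For (C2), I would split by the sign of $\langle x-D(y),\bar b_\Delta(x,y)\rangle$: when nonnegative, multiplication by $\zeta_R\le 1$ only weakens it; when negative, the product is $\le 0\le\tilde K_2(1+|x|^2+|y|^2)$. For (C4), on the set $\{|x|\vee|y|\le R\}$ the cutoff satisfies $\zeta_R\equiv 1$, so $\tilde b_\Delta=\bar b_\Delta$ there and the bound descends directly from (C4) for $\bar b_\Delta$.

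The substantive content is (C3$'$), and I would take $R_0=R+1$. My plan is to use the product-rule identity
\begin{equation*}
\tilde b_\Delta(x,y)-\tilde b_\Delta(\bar x,\bar y)=\zeta_R(x,y)\bigl[\bar b_\Delta(x,y)-\bar b_\Delta(\bar x,\bar y)\bigr]+\bigl[\zeta_R(x,y)-\zeta_R(\bar x,\bar y)\bigr]\bar b_\Delta(\bar x,\bar y),
\end{equation*}
and then to split by whether each of $(x,y)$ and $(\bar x,\bar y)$ lies in the cube $\{|u|\vee|v|\le R_0\}$ outside of which $\zeta_R$ vanishes. In the interior case (both in the cube), (C3) at level $R_0$ handles the first summand (with the same sign-case argument as for (C2) to absorb the factor $\zeta_R(x,y)\in[0,1]$), giving an $M_{R_0}(|x-\bar x|^2+|y-\bar y|^2)$ contribution. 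The second summand is then controlled by combining the $C_\zeta$-Lipschitz bound $|\zeta_R(x,y)-\zeta_R(\bar x,\bar y)|\le C_\zeta(|x-\bar x|+|y-\bar y|)$, the pointwise estimate $|\bar b_\Delta(\bar x,\bar y)|\le|b(\bar x,\bar y)|\le\bar L_{R_0}$ from (C1) together with Remark \ref{bbound}, and $|x-D(y)-\bar x+D(\bar y)|\le|x-\bar x|+\kappa|y-\bar y|$ from (A2); the elementary inequality $(a+b)^2\le 2(a^2+b^2)$ then collects the factor $2C_\zeta\bar L_{R_0}$. The fully exterior case is trivial because both $\tilde b_\Delta$ values vanish.

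The main obstacle is the mixed case where exactly one of $(x,y),(\bar x,\bar y)$ lies in the cube, since then $\bar b_\Delta$ evaluated at the exterior point has no a priori norm control and the second summand in the identity above cannot be bounded directly. My plan to resolve this is to exploit the fact that $\zeta_R$ (and hence $\tilde b_\Delta$) vanishes at the exterior point, so that one may rewrite $\tilde b_\Delta(x,y)-\tilde b_\Delta(\bar x,\bar y)=[\zeta_R(x,y)-\zeta_R(\bar x,\bar y)]\bar b_\Delta(x,y)$, shifting the $\bar b_\Delta$ evaluation to the interior point where $|\bar b_\Delta|\le\bar L_{R_0}$; the same Lipschitz-of-$\zeta_R$ plus Cauchy--Schwarz computation used in Case 1 then yields the bound $2C_\zeta\bar L_{R_0}(|x-\bar x|^2+|y-\bar y|^2)$. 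The symmetry of the inner product under $(x,y)\leftrightarrow(\bar x,\bar y)$ means only one subcase needs to be written out. Assembling the three cases gives (C3$'$) with $\bar M_{R_0}=M_{R_0}+2C_\zeta\bar L_{R_0}$, as claimed.
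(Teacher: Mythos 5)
Your proposal is correct and follows essentially the same route as the paper: the same product-rule splitting of $\tilde{b}_\Delta(x,y)-\tilde{b}_\Delta(\bar x,\bar y)$ into a $\zeta_R$-weighted difference of $\bar b_\Delta$'s (handled by (C3) and $\zeta_R\in[0,1]$) plus a difference of cutoffs times $\bar b_\Delta$ (handled by the $C_\zeta$-Lipschitz bound, $|\bar b_\Delta|\le|b|\le\bar L_{R+1}$ on the cube via (C1) and Remark \ref{bbound}, and (A2)), with $R_0=R+1$. The paper organizes the case analysis by how many of $|x|,|y|,|\bar x|,|\bar y|$ exceed $R+1$ rather than by which of the two points lies in the cube, but the cases coincide, and your trick of shifting the $\bar b_\Delta$ evaluation to the interior point in the mixed case is exactly what the paper does implicitly in its Case b.
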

\begin{proof}
By the relationship between $\tilde{b}_\Delta$ and $\bar{b}_{\Delta}$, (C1) and (C2) can be verified easily. Noting that for $|x|\vee|y|\le R$, $\zeta_R(x,y)=1$, thus we get
\begin{equation*}
\begin{split}
&\sup_{|x|\vee|y|\le R}\left[|b(x,y)-\tilde{b}_\Delta(x,y)|^{p}\right]=\sup_{|x|\vee|y|\le R}\left[|b(x,y)-\bar{b}_\Delta(x,y)\zeta_\Delta(x,y)|^{p}\right]\\
=&\sup_{|x|\vee|y|\le R}\left[|b(x,y)-\bar{b}_\Delta(x,y)|^{p}\right]\le N_R\Delta^{\alpha p}\rightarrow 0, \quad\mbox{as}\quad \Delta\rightarrow 0,
\end{split}
\end{equation*}
then (C4) holds for $\tilde{b}_\Delta(x,y)$. Now we are going to check (C3'). Divide it into four cases.\\
{\bf Case a:} None of $|x|,|y|,|\bar{x}|,|\bar{y}|$ bigger than $R+1$. In this case, we see $0\le\zeta_{R}(x,y),\zeta_{R}(\bar{x},\bar{y})\le1$. Rewrite $\tilde{b}_\Delta$ with $\bar{b}_\Delta$, we have
\begin{equation*}
\begin{split}
&\langle x-D(y)-\bar{x}+D(\bar{y}), \tilde{b}_\Delta(x, y)-\tilde{b}_\Delta(\bar{x}, \bar{y})\rangle\\
=&\langle x-D(y)-\bar{x}+D(\bar{y}), \bar{b}_\Delta(x, y)-\bar{b}_\Delta(\bar{x}, \bar{y})\rangle\zeta_R(x,y)\\
&+\langle x-D(y)-\bar{x}+D(\bar{y}), (\zeta_R(x,y)-\zeta_R(\bar{x},\bar{y}))\bar{b}_\Delta(\bar{x}, \bar{y})\rangle\\
=:&q_1+q_2.
\end{split}
\end{equation*}
Since $0\le\zeta_R(x,y)\le 1$, thus by (C3),
\begin{equation*}
\begin{split}
q_1\le M_{R+1}(|x-\bar{x}|^2+|y-\bar{y}|^2).
\end{split}
\end{equation*}
Further, noting that $\zeta_R$ is Lipschitz with constant $C_\zeta$ and for $|\bar{x}|\vee|\bar{y}|\le R+1$, we see from (C1) and Remark \ref{bbound} that $|\bar{b}_\Delta(\bar{x}, \bar{y})|\le|b(\bar{x},\bar{y})|\le \bar{L}_{R+1}$, then (A2) leads to
\begin{equation*}
\begin{split}
q_2\le 2C_\zeta \bar{L}_{R+1}(|x-\bar{x}|^2+|y-\bar{y}|^2).
\end{split}
\end{equation*}
Combining the estimation of $q_1$ and $q_2$, we get
\begin{equation*}
\begin{split}
&\quad \< x-D(y)-\bar{x}+D(\bar{y}), \tilde{b}_\Delta(x, y)-\tilde{b}_\Delta(\bar{x}, \bar{y})\>\le (M_{R+1}+2C_\zeta \bar{L}_{R+1})(|x-\bar{x}|^2+|y-\bar{y}|^2).
\end{split}
\end{equation*}
{\bf Case b:} One of $|x|,|y|,|\bar{x}|,|\bar{y}|$ bigger than $R+1$. Assume $|x|>R+1$ and $|y|,|\bar{x}|,|\bar{y}|\le R+1$. In this case, we have $\zeta_{R}(x,y)=0$ and $0\le\zeta_{R}(\bar{x},\bar{y})\le1$. Similar to Case a, we have
\begin{equation*}
\begin{split}
&\quad \< x-D(y)-\bar{x}+D(\bar{y}), \tilde{b}_\Delta(x, y)-\tilde{b}_\Delta(\bar{x}, \bar{y})\>\le 2C_\zeta \bar{L}_{R+1}(|x-\bar{x}|^2+|y-\bar{y}|^2).
\end{split}
\end{equation*}
{\bf Case c:} Two of $|x|,|y|,|\bar{x}|,|\bar{y}|$ bigger than $R+1$. We divide it into two cases.\\
{\bf{i)}:} Both $|x|,|y|$ bigger than $R+1$ or both $|\bar{x}|,|\bar{y}|$ bigger than $R+1$. Consider one of the case $|x|,|y|>R+1$ while $|\bar{x}|,|\bar{y}|\le R+1$. It is obvious that $\zeta_R(x,y)=0$ and $0\le\zeta_{R}(\bar{x},\bar{y})\le1$. By taking similar steps as Case a, we can get
\begin{equation*}
\begin{split}
&\quad \< x-D(y)-\bar{x}+D(\bar{y}), \tilde{b}_\Delta(x, y)-\tilde{b}_\Delta(\bar{x}, \bar{y})\>\le 2C_\zeta \bar{L}_{R+1}(|x-\bar{x}|^2+|y-\bar{y}|^2).
\end{split}
\end{equation*}
{\bf \bf{ii)}:} One of $|x|,|y|$ bigger than $R+1$ and one of $|\bar{x}|,|\bar{y}|$ bigger than $R+1$. Consider the case of $|x|>R+1, |y|\le R+1,|\bar{x}|>R+1,|\bar{y}|\le R+1$. Then $\zeta_R(x,y)=\zeta_{R}(\bar{x},\bar{y})=0$ and $\langle x-D(y)-\bar{x}+D(\bar{y}), \tilde{b}_\Delta(x, y)-\tilde{b}_\Delta(\bar{x}, \bar{y})\rangle=0$. \\
{\bf Case d:} Three or four of $|x|,|y|,|\bar{x}|,|\bar{y}|$ bigger than $R+1$. Since we have $\zeta_R(x,y)=\zeta_{R}(\bar{x},\bar{y})=0$, the result is obvious.\\
Taking Cases a-d into consideration, there exists an $\bar{M}_{R_0}$ such that (C3') satisfies for all $x, y, \bar{x}, \bar{y} \in\R^n$.

\end{proof}

\begin{rem}
{\rm Lemma \ref{diff} shows that with assumptions (C1)-(C3), \eqref{2discrete1} is well defined under some constraints on time step $\Delta$. It is worth mentioning that (C3) and (C3') are merely used to guarantee the uniqueness of numerical solutions.
}
\end{rem}

\begin{rem}
{\rm Under assumptions (A2)-(A3), (A5), we now give an example such that the set of sequences of functions satisfy (C1)-(C4). Let $b(x, y), \sigma(x,y)$ be one-dimensional and define
\begin{equation*}
\bar{b}_\Delta(x,y)=\frac{1}{1+\Delta^{\alpha}|b(x,y)|+\Delta^{\alpha/2}\|\sigma(x,y)\|}b(x,y),
\end{equation*}
and
\begin{equation*}
\sigma_\Delta(x,y)=\frac{1}{1+\Delta^{\alpha}|b(x,y)|+\Delta^{\alpha/2}\|\sigma(x,y)\|}\sigma(x,y),
\end{equation*}
for any  $x, y\in\mathbb{R}$. It is easy to see $|\bar{b}_\Delta(x, y)|\le |b(x,y)|$, and on the other hand, we have
\begin{equation*}
|\bar{b}_\Delta(x,y)|=\frac{\Delta^{-\alpha}|b(x,y)|}{\Delta^{-\alpha}+|b(x,y)|+\Delta^{-\alpha/2}\|\sigma(x,y)\|}\le K_5\Delta^{-\alpha}\le K_5\Delta^{-\alpha}(1+|x|+|y|),
\end{equation*}
and
\begin{equation*}
\|\sigma_\Delta(x,y)\|^2=\left[\frac{\Delta^{-\alpha/2}\sigma(x,y)}{\Delta^{-\alpha/2}+\Delta^{\alpha/2}|b(x,y)|+\|\sigma(x,y)\|}\right]^2\le K_5\Delta^{-\alpha}\le K_5\Delta^{-\alpha}(1+|x|^2+|y|^2).
\end{equation*}
Furthermore, due to (A3),
\begin{equation*}\label{b2}
\begin{split}
&\quad \< x-D(y), \bar{b}_\Delta(x,y)\>=\frac{\< x-D(y), b(x,y)\>}{1+\Delta^{\alpha}|b(x,y)|+\Delta^{\alpha/2}\|\sigma(x,y)\|}\le K_2(1+|x|^2+|y|^2).
\end{split}
\end{equation*}
That is to say, (C2) is satisfied.  In order to show (C3), we have to divide it into several cases. Denote by $\Gamma(x,y)=1+\Delta^{\alpha}|b(x,y)|+\Delta^{\alpha/2}\|\sigma(x,y)\|$. \\
{\bf Case a:} $b(x,y)\cdot b(\bar{x},\bar{y})<0$. We divide this into four classes. \\
{\bf{i)}:} For $b(x,y)>0, b(\bar{x},\bar{y})<0$ and $x-D(y)-\bar{x}+D(\bar{y})\ge0$,
\begin{equation*}
\begin{split}
&\< x-D(y)-\bar{x}+D(\bar{y}), \bar{b}_\Delta(x, y)-\bar{b}_\Delta(\bar{x}, \bar{y})\>=\left\< x-D(y)-\bar{x}+D(\bar{y}), \frac{b(x, y)}{\Gamma(x,y)}-\frac{b(\bar{x}, \bar{y})}{\Gamma(\bar{x},\bar{y})}\right\>\\
\le& \< x-D(y)-\bar{x}+D(\bar{y}), b(x, y)-b(\bar{x}, \bar{y})\>\le L_R(|x-\bar{x}|^2+|y-\bar{y}|^2).
\end{split}
\end{equation*}
{\bf{ii)}:} For $b(x,y)>0, b(\bar{x},\bar{y})<0$ and $x-D(y)-\bar{x}+D(\bar{y})<0$, the result is obvious. \\
{\bf{iii)}:} For $b(x,y)<0, b(\bar{x},\bar{y})>0$ and $x-D(y)-\bar{x}+D(\bar{y})<0$,
\begin{equation*}
\begin{split}
&\< x-D(y)-\bar{x}+D(\bar{y}), \bar{b}_\Delta(x, y)-\bar{b}_\Delta(\bar{x}, \bar{y})\>\\
\le& \< x-D(y)-\bar{x}+D(\bar{y}), b(x, y)-b(\bar{x}, \bar{y})\>\le L_R(|x-\bar{x}|^2+|y-\bar{y}|^2).
\end{split}
\end{equation*}
{\bf \bf{iv)}:} For $b(x,y)<0, b(\bar{x},\bar{y})>0$ and $x-D(y)-\bar{x}+D(\bar{y})\ge0$, the result is also obvious. \\
{\bf Case b:} $b(x,y)\cdot b(\bar{x},\bar{y})>0$. We compute
\begin{equation*}
\begin{split}
&\< x-D(y)-\bar{x}+D(\bar{y}), \bar{b}_\Delta(x, y)-\bar{b}_\Delta(\bar{x}, \bar{y})\>=\left\< x-D(y)-\bar{x}+D(\bar{y}), \frac{b(x, y)-b(\bar{x}, \bar{y})}{\Gamma(x,y)\Gamma(\bar{x},\bar{y})}\right\>\\
&+\left\< x-D(y)-\bar{x}+D(\bar{y}),\frac{\Delta^{\alpha}[b(x, y)|b(\bar{x}, \bar{y})|-b(\bar{x}, \bar{y})|b(x, y)|]}{\Gamma(x,y)\Gamma(\bar{x},\bar{y})}\right\>\\
&+\left\< x-D(y)-\bar{x}+D(\bar{y}),\frac{\Delta^{\alpha/2}\|\sigma(\bar{x},\bar{y})\|[b(x, y)-b(\bar{x}, \bar{y})]}{\Gamma(x,y)\Gamma(\bar{x},\bar{y})}\right\>\\
&+\left\< x-D(y)-\bar{x}+D(\bar{y}),\frac{\Delta^{\alpha/2}b(\bar{x}, \bar{y})[\|\sigma(\bar{x},\bar{y})\|-\|\sigma(x, y)\|]}{\Gamma(x,y)\Gamma(\bar{x},\bar{y})}\right\>\\
:=&\bar{q}_1+\bar{q}_2+\bar{q}_3+\bar{q}_4.
\end{split}
\end{equation*}
Obviously, $\bar{q}_2=0$. Noticing that $\Gamma(x,y)\ge1, \Gamma(\bar{x},\bar{y})\ge1$ and $0<\frac{\Delta^{\alpha/2}\|\sigma(\bar{x},\bar{y})\|}{\Gamma(\bar{x},\bar{y})}\le 1$, we then derive from (A2), (A5) and Remark \ref{bbound} that
\begin{equation*}
\begin{split}
\bar{q}_1+\bar{q}_3\le 2L_R(|x-\bar{x}|^2+|y-\bar{y}|^2),
\end{split}
\end{equation*}
and
\begin{equation*}
\begin{split}
\bar{q}_4\le& \frac{1}{2}|x-D(y)-\bar{x}+D(\bar{y})|^2+\frac{\Delta^{\alpha}|b(\bar{x}, \bar{y})|^2[\|\sigma(\bar{x},\bar{y})\|-\|\sigma(x, y)\|]^2}{2\Gamma^2(x,y)\Gamma^2(\bar{x},\bar{y})}\\
\le&|x-\bar{x}|^2+|y-\bar{y}|^2+|b(x,y)|\|\sigma(\bar{x},\bar{y})-\sigma(x, y)\|^2\\
\le&(1+L_R\bar{L}_R)(|x-\bar{x}|^2+|y-\bar{y}|^2).
\end{split}
\end{equation*}
This shows that (C3) is satisfied. Thanks to (A3) and Remark \ref{bbound}, we see that
\begin{equation*}
\begin{split}
\sup_{|x|\vee|y|\le R}|b(x,y)-\bar{b}_\Delta(x,y)|^{p}
\le \Delta^{\alpha p}\sup_{|x|\vee|y|\le R}\frac{(|b(x,y)|+\|\sigma(x,y)\|^{2})^p|b(x,y)|^{p}}{(1+\Delta^{\alpha}|b(x,y)|+\Delta^{\alpha}\|\sigma(x,y)\|^2)^{p}}\le C\Delta^{\alpha p}\rightarrow 0,
\end{split}
\end{equation*}
In the same way we can show that  the diffusion coefficient satisfies (C4).
}
\end{rem}

Condition (C3') shows that $\tilde{b}_\Delta$ is global one-sided Lipschitz. According to the monotone operator, the implicit scheme \eqref{2discrete1} is well defined with $\theta\bar{M}_{R_0}\Delta<1$. Define $\Delta_3=\frac{1}{\theta\bar{M}_{R_0}}$ for $\theta\in(0,1]$. Thus in the following section, we set $\Delta^\star\in(0,\Delta_3\wedge\Delta_2)$, and let $0<\Delta\le\Delta^\star$ for $\theta\in(0,1]$ while for $\theta=0$, we set $\Delta\in(0,1)$.

\subsection{Convergence of the Numerical Solutions}

We need the following lemma.
\begin{lem}\label{2exactexist}
{\rm Let (A1)-(A3) and (A5) hold, then it holds that
\begin{equation*}
\sup\limits_{0\le t\le T}\mathbb{E}|X(t)|^p\vee \sup\limits_{0\le t\le T}\mathbb{E}|Y_\Delta(t)|^p\le C,
\end{equation*}
for any $p\ge 2$.
}
\end{lem}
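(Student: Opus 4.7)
The assertion splits into two essentially independent estimates, and I would treat them separately.

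For the bound on $Y_\Delta$, I would reduce directly to Lemma \ref{pmoment}. The improved scheme \eqref{2discrete1} has exactly the structural form of \eqref{discrete1} with $b_\Delta$ replaced by $\tilde b_\Delta = \bar b_\Delta\,\zeta_R$, and Lemma \ref{diff} shows that the pair $(\tilde b_\Delta,\sigma_\Delta)$ satisfies (C1) and (C2), which are word-for-word copies of (B1) and (B2). Since the proof of Lemma \ref{pmoment} uses nothing beyond (A1)-(A2) and (B1)-(B2) (the Ito-type expansion of $[1+|\tilde Y_\Delta(t)|^2]^{p/2}$, together with the growth control on $b_\Delta$ and $\sigma_\Delta$), the same argument applied with $\tilde b_\Delta$ in place of $b_\Delta$ yields $\sup_{0\le t\le T}\mathbb{E}|Y_\Delta(t)|^p\le C$ with no new computation.

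For $X(t)$, note that (A3) is a global coercivity-type estimate and has not been weakened in this section (only (A4) has been replaced by the local (A5)), so it remains the natural tool. I would introduce the stopping time $\tau_N:=\inf\{t\ge 0:|X(t)|\ge N\}$ to keep every quantity a priori finite, apply the It\^o formula to $[1+|X(t\wedge\tau_N)-D(X(t\wedge\tau_N-\tau))|^2]^{p/2}$, and bound the drift inner product $\langle X-D(X(\cdot-\tau)),b\rangle$, the squared diffusion $\|\sigma\|^2$ and (via Cauchy--Schwarz together with $\|\sigma\|^2$) the cross term $|\sigma^{*}(X-D(X(\cdot-\tau)))|^2$ all by $K_2(1+|X|^2+|X(\cdot-\tau)|^2)$ through (A3). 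After Young's inequality this should deliver
\[
\mathbb{E}\bigl[1+|X(t\wedge\tau_N)-D(X(t\wedge\tau_N-\tau))|^2\bigr]^{p/2}
\le C + C\int_0^t\sup_{0\le u\le s}\mathbb{E}|X(u\wedge\tau_N)|^p\,\mathrm{d}s.
\]
The left-hand side is then converted to a bound on $\mathbb{E}|X|^p$ via the elementary inequality $|X(t)-D(X(t-\tau))|^p\ge 2^{1-p}|X(t)|^p-\kappa^p|X(t-\tau)|^p$, after which I take suprema in $t$ and move the delayed contribution across with the help of $\mathbb{E}\|\xi\|_\infty^p<\infty$. Gronwall's lemma then yields a constant independent of $N$, and passing to $N\to\infty$ by Fatou's lemma finishes the argument.

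The main obstacle is the usual one in the neutral setting: the term $D(X(t-\tau))$ couples $\mathbb{E}|X(t)|^p$ with $\mathbb{E}|X(t-\tau)|^p$, and if not handled carefully this coupling produces a Gronwall kernel that grows with the delay rather than being absorbed. What makes the plan go through is precisely the hypothesis $\kappa<1/2$ in (A2): it gives $\kappa^p<2^{-p}<2^{1-p}$ for every $p\ge 2$, so the coefficient $2^{1-p}-\kappa^p$ that multiplies $\sup_{0\le u\le t}\mathbb{E}|X(u\wedge\tau_N)|^p$ after absorption of the delayed term is strictly positive and can be divided out uniformly in $N$.
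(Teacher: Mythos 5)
Your proposal is correct and follows essentially the route the paper intends: the bound on $Y_\Delta$ is obtained exactly as in the Remark following the lemma, by observing that $\tilde b_\Delta=\bar b_\Delta\zeta_R$ and $\sigma_\Delta$ inherit (B1)--(B2) from (C1)--(C2) so that Lemma \ref{pmoment} applies verbatim. The paper gives no explicit argument for the $X(t)$ half (the analogous Lemma \ref{exactexist} is delegated to \cite{jiyuan16}); your localization--It\^o--coercivity argument via (A3), with the neutral term absorbed using $\kappa<1/2$ and the conclusion obtained by Gronwall and Fatou, is the standard proof and correctly fills that gap.
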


\begin{rem}
{\rm Since Lemmas \ref{pmoment} and \ref{ytytk} depend only on assumptions (A1)-(A2) and (B1)-(B2), in this section, $\bar{b}_\Delta$ and $\sigma_\Delta$ satisfy (C1)-(C2), which implies that the corresponding $\tilde{b}_\Delta$ and $\sigma_\Delta$ also satisfy (B1)-(B2). Thus, Lemmas \ref{pmoment} and \ref{ytytk} proposed in Section 2 still hold in this section.
}
\end{rem}

We now state the main result in this Section.

\begin{thm}
{\rm
Let (A1)-(A3), (A5) and (C1)-(C4) hold, then the continuous form $Y_{\Delta}(t)$ of the tamed theta scheme \eqref{2discrete1} converges strongly to the exact solution $X(t)$  of \eqref{brownian},  that is,
\begin{equation*}
\lim\limits_{\Delta\rightarrow 0}\sup\limits_{0\le t\le T}\mathbb{E}|X(t)-Y_{\Delta}(t)|^2=0.
\end{equation*}

}
\end{thm}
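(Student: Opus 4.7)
The plan is to use a stopping-time truncation argument combined with Young's inequality to reduce the local-Lipschitz case to an analogue of the Section 2 estimate on a ball of radius $R$. Introduce
\[ \tau_R := \inf\{t\in[0,T] : |X(t)| \ge R\}, \quad \rho_R := \inf\{t\in[0,T] : |Y_{\Delta}(t)| \ge R\}, \quad \theta_R := \tau_R\wedge\rho_R, \]
set $e(t) := X(t) - Y_{\Delta}(t)$, and decompose
\[ \mathbb{E}|e(t)|^2 \le \mathbb{E}|e(t\wedge\theta_R)|^2 + \mathbb{E}\!\left[|e(t)|^2 \mathbf{1}_{\{\theta_R\le T\}}\right]. \]
The two pieces will be handled separately and the three parameters $\Delta$, $R$, and an auxiliary $\delta>0$ tuned at the end. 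The key point is that on $\{s<\theta_R\}$ the cutoff $\zeta_R$ equals $1$, so the implicit drift collapses to $\bar b_{\Delta}$, and (A5), (C3), (C4), together with Remark~\ref{bbound}, apply with the \emph{local} constants $L_R$, $M_R$, $\bar L_R$, $N_R$.

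For the tail piece I would apply Young's inequality with exponents $p/2$ and $p/(p-2)$ to obtain
\[ \mathbb{E}\!\left[|e(t)|^2\mathbf{1}_{\{\theta_R\le T\}}\right] \le \tfrac{2\delta}{p}\mathbb{E}|e(t)|^p + \tfrac{p-2}{p}\delta^{-2/(p-2)}\mathbb{P}(\theta_R\le T). \]
Lemma~\ref{2exactexist} bounds $\mathbb{E}|e(t)|^p$ uniformly in $\Delta$, while Chebyshev (applied to $|X(\tau_R\wedge T)|^p$ and $|Y_{\Delta}(\rho_R\wedge T)|^p$, whose boundedness follows from the same moment argument used in Lemma~\ref{pmoment} with $t$ replaced by a stopping time) gives $\mathbb{P}(\theta_R\le T)\le C R^{-p}$. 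For the main piece $\mathbb{E}|e(t\wedge\theta_R)|^2$, I would redo the It\^{o}-formula calculation of the strong-convergence theorem of Section 2 with $t$ replaced by $t\wedge\theta_R$ throughout, reinterpreting the decomposition $H_1,\dots,H_7$: where the earlier proof used the global (A4) and (B4), now (A5) governs $H_3, H_6$ with constant $L_R$, (C4) governs $H_1, H_4$ producing the error $N_R\Delta^{\alpha p}$, and Lemma~\ref{diff} controls the analogues of $H_2, H_5$ through $\bar L_R$ together with Lemma~\ref{ytytk}. Gronwall's inequality then delivers $\sup_{t\le T}\mathbb{E}|e(t\wedge\theta_R)|^2\le C_R\,\eta(\Delta)$ with $\eta(\Delta)\to 0$ as $\Delta\to 0$, and the three-fold limit -- $\delta\downarrow 0$, then $R\uparrow\infty$, then $\Delta\downarrow 0$ -- closes the proof.

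The main obstacle is ensuring that on $\{s<\theta_R\}$ every argument entering the coefficients is uniformly bounded, so that the \emph{local} constants $L_R$, $\bar L_R$, $N_R$ genuinely control the $H_i$'s. The delayed arguments $\bar Y_{\Delta}(s-\tau)$ are no problem (same trajectory at an earlier time), but the forward implicit arguments $\bar Y_{\Delta+}(s)$ forming part of $\tilde Y_{\Delta}-\vec Y_\Delta$ require working at radius $R+1$ as in the four cases of Lemma~\ref{diff}, which only costs an additive $2C_\zeta \bar L_{R+1}$ in the bounds. The initial segment $\xi$ on $[-\tau,0)$ is handled by enlarging the stopping time definition to the interval $[-\tau,T]$ and absorbing the event $\{\|\xi\|_\infty\ge R\}$ into the tail piece via $\mathbb{E}\|\xi\|_\infty^p<\infty$. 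A secondary technical nuisance is that $Y_{\Delta}$ is discontinuous at the grid points because of the neutral term, so $\{\rho_R\le T\}$ need not equal $\{|Y_{\Delta}(\rho_R)|\ge R\}$; this is circumvented by defining $\rho_R$ as a discrete stopping time over the grid $\{t_k\}$ and absorbing the grid-point jumps into a slightly larger $R$.
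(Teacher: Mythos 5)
Your proposal follows essentially the same route as the paper: localize with the stopping times $\tau_R$, $\rho_R$, $\nu_R$, split off the tail via Young's inequality and a Chebyshev bound $\mathbb{P}(\nu_R\le T)\le CR^{-p}$ from the moment lemmas, run the It\^{o}-formula/Gronwall argument on the stopped quantity $I(t\wedge\nu_R)$ using the local constants $L_R$, $\bar L_R$, $N_R$ to obtain a bound $C_R\Delta^{\alpha}$, and then tune $\delta$ (the paper's $\eta$), $R$, and $\Delta$ in that order. The only differences are cosmetic (the paper regroups the error terms as $I_1,\dots,I_6$ at the $p=2$ level rather than reusing the $H_1,\dots,H_7$ decomposition), so the proposal is correct and matches the paper's proof.
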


\begin{proof}
Denote $e(t)=Y_{\Delta}(t)-X(t)$, and for any $R>0$ define the following stopping time
\begin{equation*}
\tau_R=\inf\{t\ge 0: |Y_{\Delta}(t)|\ge R\}, \rho_R=\inf\{t\ge 0: |X(t)|\ge R\}, \nu_R=\tau_R\wedge\rho_R.
\end{equation*}
For any $\eta>0$, by the Young inequality,
\begin{equation}\label{ideltau2}
\begin{split}
\sup\limits_{0\le u\le T}\mathbb{E}|e(u)|^2=&\sup\limits_{0\le u\le T}\mathbb{E}(|e(u)|^2{\bf I}_{\{\tau_R>T,\rho_R>T\}})+\sup\limits_{0\le u\le T}\mathbb{E}(|e(u)|^2{\bf I}_{\{\tau_R\le T~{\rm or}~\rho_R\le T\}})\\
\le&\sup\limits_{0\le u\le T}\mathbb{E}(|e(u\wedge\nu_R)|^2{\bf I}_{\{\nu_R>T\}})+\frac{2\eta}{p}\sup\limits_{0\le u\le T}\mathbb{E}|e(u)|^p\\
&+\frac{p-2}{p\eta^{\frac{2}{p-2}}}\mathbb{P}(\tau_R\le T~{\rm or}~\rho_R\le T).
\end{split}
\end{equation}
Due to Lemma \ref{pmoment},
\begin{equation*}
\begin{split}
\mathbb{P}(\tau_R\le T)=\mathbb{E}\left({\bf I}_{\{\tau_R\le T\}}\frac{|Y_\Delta(\tau_R)|^p}{R^p}\right)\le\frac{1}{R^p}\sup\limits_{0\le u \le T}\mathbb{E}|Y_{\Delta}(u)|^p\le\frac{C}{R^p},
\end{split}
\end{equation*}
where here and in the following, we emphasize that $C$ is a positive constant independent of $\Delta, R$ and $\varepsilon$, while $C_R$ will be  a positive constant depending on $R$. Similarly, we derive from Lemma \ref{2exactexist} that
\begin{equation}\label{ptaur}
\begin{split}
\mathbb{P}(\tau_R\le T~{\rm or}~\rho_R\le T)\le\mathbb{P}(\tau_R\le T)+\mathbb{P}(\rho_R\le T)\le\frac{2C}{R^p}.
\end{split}
\end{equation}
On the other hand, Lemma \ref{pmoment} and Lemma \ref{2exactexist} yield
\begin{equation}\label{eideltaup}
\begin{split}
\sup\limits_{0\le u\le T}\mathbb{E}|e(u)|^p\le 2^{p-1}\sup\limits_{0\le u \le T}\mathbb{E}(|Y_{\Delta}(u)|^p+|X(u)|^p)\le C.
\end{split}
\end{equation}
Denote by $I(t)=Y_{\Delta}(t)-D(Y_{\Delta}(t-\tau))-\theta \tilde{b}_\Delta(Y_{\Delta}(t),Y_{\Delta}(t-\tau))\Delta-X(t)+D(X(t-\tau))$.
Applying the It\^{o} formula,
\begin{equation*}
\begin{split}
&\mathbb{E}|I(T\wedge\nu_R)|^2=|I(0)|^2+2\mathbb{E}\int_0^{T\wedge\nu_R}\langle I(s),\tilde{b}_\Delta(\bar{Y}_{\Delta}(s), \bar{Y}_{\Delta}(s-\tau))-b(X(s),X(s-\tau))\rangle\mbox{d}s\\
&+\mathbb{E}\int_0^{T\wedge\nu_R}\|\sigma_\Delta(\bar{Y}_{\Delta}(s), \bar{Y}_{\Delta}(s-\tau))-\sigma(X(s),X(s-\tau))\|^2\mbox{d}s\\
\le&|I(0)|^2+2\mathbb{E}\int_0^{T\wedge\nu_R}\langle \bar{Y}_{\Delta}(s)-D(\bar{Y}_{\Delta}(s-\tau))-X(s)+D(X(s-\tau)),\\
&~~~~~~~~~~~~~~~b(\bar{Y}_{\Delta}(s),\bar{Y}_{\Delta}(s-\tau))-b(X(s),X(s-\tau))\rangle\mbox{d}s\\
&+2\mathbb{E}\int_0^{T\wedge\nu_R}\langle \bar{Y}_{\Delta}(s)-D(\bar{Y}_{\Delta}(s-\tau))-X(s)+D(X(s-\tau)),\\
&~~~~~~~~~~~~~~~\tilde{b}_\Delta(\bar{Y}_{\Delta}(s),\bar{Y}_{\Delta}(s-\tau))-b(\bar{Y}_{\Delta}(s),\bar{Y}_{\Delta}(s-\tau))\rangle\mbox{d}s\\
&+2\mathbb{E}\int_0^{T\wedge\nu_R}\langle Y_{\Delta}(s)-D(Y_{\Delta}(s-\tau))-\bar{Y}_{\Delta}(s)+D(\bar{Y}_{\Delta}(s-\tau)),\\
&~~~~~~~~~~~~~~~\tilde{b}_\Delta(\bar{Y}_{\Delta}(s),\bar{Y}_{\Delta}(s-\tau))-b(X(s),X(s-\tau))\rangle\mbox{d}s\\
&-2\theta\Delta\mathbb{E}\int_0^{T\wedge\nu_R}\langle \tilde{b}_\Delta(Y_{\Delta}(s),Y_{\Delta}(s-\tau)),\tilde{b}_\Delta(\bar{Y}_{\Delta}(s), \bar{Y}_{\Delta}(s-\tau))-b(X(s),X(s-\tau))\rangle\mbox{d}s\\
&+2\mathbb{E}\int_0^{T\wedge\nu_R}\|\sigma(\bar{Y}_{\Delta}(s),\bar{Y}_{\Delta}(s-\tau))-\sigma(X(s),X(s-\tau))\|^2\mbox{d}s\\
&+2\mathbb{E}\int_0^{T\wedge\nu_R}\|\sigma_\Delta(\bar{Y}_{\Delta}(s),\bar{Y}_{\Delta}(s-\tau))-\sigma(\bar{Y}_{\Delta}(s),\bar{Y}_{\Delta}(s-\tau))\|^2\mbox{d}s\\
\le&|I(0)|^2+\sum\limits_{i=1}^6I_i(T),
\end{split}
\end{equation*}
where $I(0)=-\theta b_\Delta(\xi(0),\xi(-\tau))\Delta$. By assumption (A5) and Lemma \ref{ytytk},
\begin{equation}\label{edelta15}
\begin{split}
\sup\limits_{0\le u\le T}(|I_1(u)+I_5(u)|)\le& C_R\int_0^{T\wedge\nu_R}[|\bar{Y}_{\Delta}(s)-X(s)|^2+|\bar{Y}_{\Delta}(s-\tau)-X(s-\tau)|^2]\mbox{d}s\\
\le& C_R\Delta+\int_0^{t}\sup\limits_{0\le u\le s}\mathbb{E}|e(u\wedge\nu_R)|^2\mbox{d}s.
\end{split}
\end{equation}
 Obviously, due to (C4), Lemma \ref{exactexist}  and Lemma \ref{pmoment},
\begin{equation}\label{edelta26}
\begin{split}
\sup\limits_{0\le u\le T}(|I_2(u)|+|I_6(u)|)\le C_R\Delta^{\alpha}+C_R\Delta^{2\alpha}.
\end{split}
\end{equation}
By (A2) and Lemma \ref{ytytk}, we obtain,
\begin{equation}\label{edelta3}
\begin{split}
\sup\limits_{0\le u\le T}|I_3(u)|\le &C\int_0^{T\wedge\nu_R}\left(\mathbb{E}[|Y_{\Delta}(s)-\bar{Y}_{\Delta}(s)|^2+|Y_{\Delta}(s-\tau)-\bar{Y}_{\Delta}(s-\tau)|^2]\right)^{\frac{1}{2}}\\
&\left(\mathbb{E}|\tilde{b}_\Delta(\bar{Y}_{\Delta}(s),\bar{Y}_{\Delta}(s-\tau))-b(X(s),X(s-\tau))|^2\right)^{\frac{1}{2}}\mbox{d}s\le C_R\Delta^{\frac{1}{2}}.
\end{split}
\end{equation}
Furthermore, by Remark \ref{bbound} and (C1)
\begin{equation}\label{edelta4}
\begin{split}
\sup\limits_{0\le u\le T}|I_4(u)|\le C_R\Delta.
\end{split}
\end{equation}
Due to \eqref{edelta15}-\eqref{edelta4}, we see
\begin{equation}\label{tl1}
\begin{split}
&\sup\limits_{0\le u\le T}\mathbb{E}|I(u\wedge\nu_R)|^2\le C_R\int_0^{T}\sup\limits_{0\le u\le s}\mathbb{E}|e(u\wedge\nu_R)|^2\mbox{d}s+C_R\Delta^\alpha.
\end{split}
\end{equation}
With assumption (A2), one has
\begin{equation*}
\begin{split}
&\sup\limits_{0\le u\le T}\mathbb{E}|e(u\wedge\nu_R)|^2\le C\sup\limits_{0\le u\le T}\mathbb{E}|I(u\wedge\nu_R)|^2+\kappa\sup\limits_{0\le u\le T}\mathbb{E}|e(u\wedge\nu_R-\tau)|^2+C_R\Delta^2.
\end{split}
\end{equation*}
This implies
\begin{equation}\label{tl2}
\begin{split}
&\sup\limits_{0\le u\le T}\mathbb{E}|e(u\wedge\nu_R)|^2\le C\sup\limits_{0\le u\le T} \mathbb{E}|I(u\wedge\nu_R)|^2+C_R\Delta^2.
\end{split}
\end{equation}
Applying the Gronwall inequality, we derive from \eqref{tl1} and \eqref{tl2} that
\begin{equation}\label{tl3}
\begin{split}
&\sup\limits_{0\le u\le T}\mathbb{E}|e(u\wedge\nu_R)|^2\le C_R\Delta^\alpha.
\end{split}
\end{equation}
Thus, combining \eqref{ptaur}, \eqref{eideltaup} and \eqref{tl3}, we see from \eqref{ideltau2} that for any given $\epsilon>0$, one can choose $\eta$ small enough such that
\begin{equation*}
\frac{2\eta}{p}C<\frac{\epsilon}{3},
\end{equation*}
and then $R$ big enough such that
\begin{equation*}
\frac{p-2}{p\eta^{\frac{2}{p-2}}}\frac{2C}{R^p}<\frac{\epsilon}{3},
\end{equation*}
finally $\Delta$ small enough to satisfy
\begin{equation*}
C_R\Delta^\alpha<\frac{\epsilon}{3}.
\end{equation*}
Therefore, we arrive at
\begin{equation*}
\begin{split}
\sup\limits_{0\le u\le T}\mathbb{E}|Y_{\Delta}(u)-X(u)|^2\rightarrow 0~{\rm as}~\Delta\rightarrow 0,
\end{split}
\end{equation*}
as required.
\end{proof}

\end{document}